\newcommand{\la}{\lambda}
\newcommand{\om}{\omega}
\newcommand{\bee}{\begin{eqnarray*}}
	\newcommand{\eee}{\end{eqnarray*}}
\newcommand{\ba}{\begin{aligned}}
	\newcommand{\ea}{\end{aligned}}
\newcommand{\bp}{\begin{proof}}
	\newcommand{\ep}{\end{proof}}  
\newcommand{\br}{\begin{remark}}
	\newcommand{\er}{\end{remark}}  
\newcommand{\lb}{\label}
\newtheorem{thm}{Theorem}[section]
\newtheorem{lem}[thm]{Lemma}
\theoremstyle{definition}
\newtheorem{defn}[thm]{Definition}
\theoremstyle{remark}
\newcommand{\Field}{\mathbb{F}}
\begin{document}
	\title[A primitive normal pair with prescribed prenorm]{A primitive normal pair with prescribed prenorm}
	\author[K. Chatterjee and S.K. Tiwari]{ Kaustav Chatterjee$^{*}$ and Shailesh Kumar Tiwari}
	\address{Department of Mathematics,  Indian Institute of Technology
		Patna, BIHAR, INDIA.}
	\email{kaustav\_2121ma03@iitp.ac.in}
	\address{Department of Mathematics,  Indian Institute of Technology
		Patna, BIHAR, INDIA.}
	\email{ sktiwari@iitp.ac.in}
	\thanks{*email:kaustav0004@gmail.com}
\subjclass[2020]{11T23; 12E20}
\keywords{Finite field; Characters; Primitive element; Normal element; Trace; Norm}
	\thanks{ }
	\maketitle
	\begin{abstract}
	For any positive integers $q$, $n$, $m$ with $q$ being a prime power and $n \geq 5$, we establish a condition sufficient to ensure the existence of a primitive normal pair $(\epsilon,f(\epsilon))$ in $\Field_{q^{n}}$ over $\Field_{q}$ such that $\mathrm{PN}_{q^n/q}(\epsilon)=a$, where $a\in\Field_{q}$ is prescribed. Here $f={f_{1}}/{f_{2}}\in\mathbb{F}_{q^n}(x)$ is a rational function subject to some minor restrictions such that deg($f_{1}$)+deg($f_{2}$)$=m$ and $\mathrm{PN}_{q^n/q}(\epsilon)
	=\sum_{i=0}^{n-1}\Bigg(\underset{j\neq i}{\underset{0\leq j\leq n-1}{\prod_{}^{}}}\epsilon^{q^j}\Bigg)$. Finally, we conclude that for $m=3$, $n\geq 6$, and $q=7^k$ where $k\in\mathbb{N}$, such a pair will exist certainly for all $(q,n)$ except possibly $10$ choices at most.
	\end{abstract} 
	\section{Introduction}
	Let $\Field_{q^n}$ represents a finite extension of degree $n$ of the finite field $\Field_{q}$, for some prime power $q$ and positive integer $n$. There are two group structures associated to $\Field_{q^{n}}$, one is the additive group $\Field_{q^{n}}$ and another one is the multiplicative group $\mathbb{F}_{q^n}^*$. Additionally, the latter one forms a cyclic group and any generator of the same is termed as a primitive element of $\Field_{q^n}$. Thus, there exist $\phi(q^n-1)$ primitive elements in the finite field $\Field_{q^n}$, where $\phi$ is the Euler's totient function. The least degree monic irreducible polynomial over $\Field_{q}$ having a primitive root $\epsilon\in\Field_{q^{n}}^*$, is referred as primitive polynomial of the primitive element. For any $\epsilon$ $\in\Field_{q^n}$, the elements $\epsilon, \epsilon^{q}, \ldots, \epsilon^{q^{n-1}}$ are said to be the conjugates of $\epsilon$ with respect to $\Field_{q}$. Clearly, the set $\{\epsilon, \epsilon^{q}, \ldots, \epsilon^{q^{n-1}}\}$ spans a subspace of $\Field_{q^{n}}$ over $\Field_{q}$. In particular, for some $\epsilon\in\Field_{q^{n}}$, the set contained with the conjugates of $\epsilon$ forms a basis of $\Field_{q^n}$ over $\Field_q$, then it is said to be a normal basis, whereas the element is referred as a normal element. A primitive normal pair $(\epsilon,\delta) \in \mathbb{F}_{q^n}^* \times \mathbb{F}_{q^n}^*$ is characterized by the property that both $\epsilon$ and $\delta$ are primitive and normal with respect to the finite field $\mathbb{F}_{q}$. To obtain further information on primitive normal elements in finite fields, we recommend \cite{RH} to the reader. In this article, first we shall present the following definition.
	\begin{defn}
			For any $\epsilon\in\Field_{q^{n}}$, the \textit{prenorm} of $\epsilon$ over $\Field_{q}$ is denoted by  $\mathrm{PN}_{q^n/q}(\epsilon)$ and is defined by
		\begin{equation}\nonumber
			\mathrm{PN}_{q^n/q}(\epsilon)=\sum_{i=0}^{n-1}\Bigg(\underset{j\neq i}{\prod_{j=0}^{n-1}}\epsilon^{q^j}\Bigg).
		\end{equation}
			In other words, considering the products of the form $\epsilon\cdot\epsilon^q\ldots\epsilon^{q^{j-1}}\cdot\epsilon^{q^{j+1}}\ldots\epsilon^{q^{n-1}} (0\leq j \leq n-1)$ and then summing up these products, we get the prenorm of $\epsilon$ over $\Field_{q}$.
	\end{defn}
Let $\Upsilon(x)=x^n+a_{n-1}x^{n-1}+\ldots+a_{1}x+a_{0}\in\Field_{q}[x]$ be the minimum polynomial of $\epsilon$.  Further, the roots of $\Upsilon$ are $\epsilon, \epsilon^{q}, \ldots, \epsilon^{q^{n-1}}$. Thus 
	\begin{equation}\nonumber
		\begin{aligned}
			\Upsilon(x)&=x^n+a_{n-1}x^{n-1}+\ldots+a_{1}x+a_{0}\\
			&=(x-\epsilon)(x-\epsilon^q)\ldots(x-\epsilon^{q^{n-1}}),
		\end{aligned}
	\end{equation}
	and following the comparison of the coefficients we get that $\mathrm{PN}_{q^n/q}(\epsilon)=(-1)^{n-1}a_{1}$, that is, $\mathrm{PN}_{q^n/q}(\epsilon)\in\Field_{q}$. Before this article, conditions were proved to ensure the presence of a primitive pair for a primitive normal pair $(\epsilon,f(\epsilon))$, where $f(x)$ is a rational function in $\Field_{q^{n}}(x)$ with certain restrictions, along with a specified trace or norm. In this article, our aim is to identify those pairs $(q,n)$ for which the field $\Field_{q^{n}}$ contains a primitive normal pair $(\epsilon,f(\epsilon))$ over $\Field_{q}$, satisfying $\mathrm{PN}_{q^n/q}(\epsilon)=a$, for prescribed $a\in\Field_{q}$. Furthermore, the \textit{trace} of an element $\epsilon \in \Field_{q^n}$ over $\Field_{q}$, represented as $\mathrm{Tr}_{{q^n}/q}(\epsilon)$ and is defined as $\mathrm{Tr}_{{q^n}/q}(\epsilon)=\epsilon+\epsilon^{q}+\ldots+\epsilon^{q^{n-1}}$. Similarly, the \textit{norm} of an element $\epsilon \in \Field_{q^n}$ over $\Field_{q}$, denoted by $\mathrm{N}_{{q^n}/q}(\epsilon)$ and is defined as $\mathrm{N}_{{q^n}/q}(\epsilon)=\epsilon\cdot\epsilon^{q}\ldots\epsilon^{q^{n-1}}$. To proceed, we shall use of the following result.
	\begin{lem}\lb{L1.1}
		Assume that $q$, $n$ be positive integers where $q$ is a prime power. Then $\mathrm{PN}_{q^n/q}(\epsilon)=\mathrm{Tr}_{q^n/q}(\epsilon^{-1})\cdot \mathrm{N}_{q^n/q}(\epsilon)$ for any $\epsilon\in\Field_{q^{n}}^*$. 
	\end{lem}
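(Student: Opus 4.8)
The identity is a direct consequence of the definition of the prenorm together with the multiplicativity built into the norm, so the plan is essentially a one-line computation that I would carry out in two short steps, followed by a remark on where care is (barely) needed. First, I would record that since $\epsilon\in\Field_{q^n}^*$ every conjugate $\epsilon^{q^j}$ is nonzero, so the product $\mathrm{N}_{q^n/q}(\epsilon)=\prod_{j=0}^{n-1}\epsilon^{q^j}$ is a unit and, for each fixed $i$ with $0\le i\le n-1$,
\begin{equation}\nonumber
\underset{j\neq i}{\prod_{j=0}^{n-1}}\epsilon^{q^j}=\mathrm{N}_{q^n/q}(\epsilon)\cdot\bigl(\epsilon^{q^i}\bigr)^{-1}.
\end{equation}
That is, the $i$-th summand of the prenorm is just the full norm product with the single factor $\epsilon^{q^i}$ deleted.

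Second, I would invoke that the Frobenius $x\mapsto x^q$ is a field automorphism of $\Field_{q^n}$, hence commutes with inversion, giving $\bigl(\epsilon^{q^i}\bigr)^{-1}=(\epsilon^{-1})^{q^i}$ for all $i$. Substituting this into the displayed identity and summing over $i$, the factor $\mathrm{N}_{q^n/q}(\epsilon)$ does not depend on $i$ and so pulls out of the sum:
\begin{equation}\nonumber
\mathrm{PN}_{q^n/q}(\epsilon)=\sum_{i=0}^{n-1}\mathrm{N}_{q^n/q}(\epsilon)\,(\epsilon^{-1})^{q^i}=\mathrm{N}_{q^n/q}(\epsilon)\sum_{i=0}^{n-1}(\epsilon^{-1})^{q^i}=\mathrm{N}_{q^n/q}(\epsilon)\cdot\mathrm{Tr}_{q^n/q}(\epsilon^{-1}),
\end{equation}
where the last equality is just the definition of the trace applied to $\epsilon^{-1}$. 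This is already the assertion of the lemma.

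There is no genuine obstacle here; the only point that deserves a word of justification is the nonvanishing of the conjugates, which is exactly why the hypothesis $\epsilon\in\Field_{q^n}^*$ is imposed (for $\epsilon=0$ the symbol $\epsilon^{-1}$ is meaningless, although both sides then vanish trivially). As a sanity check one can also recover the identity from the coefficient comparison already performed in the introduction: with $\mathrm{PN}_{q^n/q}(\epsilon)=(-1)^{n-1}a_1$ and $\mathrm{N}_{q^n/q}(\epsilon)=(-1)^{n}a_0$ read off from the minimal polynomial $\Upsilon$ of $\epsilon$, and $\mathrm{Tr}_{q^n/q}(\epsilon^{-1})=-a_1/a_0$ read off from the monic reciprocal polynomial of $\Upsilon$ (whose roots are the $\epsilon^{-q^i}$), one gets $\mathrm{Tr}_{q^n/q}(\epsilon^{-1})\cdot\mathrm{N}_{q^n/q}(\epsilon)=(-a_1/a_0)\cdot(-1)^{n}a_0=(-1)^{n-1}a_1=\mathrm{PN}_{q^n/q}(\epsilon)$. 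I would nonetheless present the first, factorization-based argument, since it is cleaner and avoids any appeal to the minimal polynomial.
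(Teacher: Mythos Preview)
Your argument is correct and is essentially identical to the paper's own proof: both rewrite the $i$-th summand as $\epsilon^{-q^i}\cdot\prod_{j=0}^{n-1}\epsilon^{q^j}$, factor out the norm, and recognize the remaining sum as $\mathrm{Tr}_{q^n/q}(\epsilon^{-1})$. The extra remarks you add (Frobenius commuting with inversion, the minimal-polynomial sanity check) are fine but not needed.
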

	\bp
	For any $\epsilon\in\Field_{q^{n}}^*$, 
	\begin{equation}\nonumber
		\begin{aligned}
			\mathrm{PN}_{q^n/q}(\epsilon)=\sum_{i=0}^{n-1}\Bigg(\underset{j\neq i}{\prod_{j=0}^{n-1}}\epsilon^{q^j}\Bigg)&=\sum_{i=0}^{n-1}\Bigg(\epsilon^{-q^i}\underset{}{\prod_{j=0}^{n-1}}\epsilon^{q^j}\Bigg)\\&=	\Bigg(\sum_{i=0}^{n-1}\epsilon^{-q^i}\Bigg)\cdot\Bigg(\underset{}{\prod_{j=0}^{n-1}}\epsilon^{q^j}\Bigg)\\&=\mathrm{Tr}_{q^n/q}(\epsilon^{-1})\cdot \mathrm{N}_{q^n/q}(\epsilon)
		\end{aligned}
	\end{equation}
	\ep 
Thus, for $a\in\Field_{q}$, to investigate the existence of a primitive normal pair $(\epsilon,f(\epsilon))$, where $f(x) \in \Field_{q^{n}}(x)$ with $\mathrm{PN}_{\Field_{q^{n}}/\Field_{q}}(\epsilon)=a$, it is sufficient to show that for $a\in\Field_{q}$ and primitive $b\in\Field_{q}^*$, there exists a primitive normal pair $(\epsilon,f(\epsilon))$, where $f(x) \in \Field_{q^{n}}(x)$ and $\mathrm{Tr}_{q^n/q}(\epsilon^{-1})=ab^{-1}$ and $\mathrm{N}_{q^n/q}(\epsilon)=b$. Indeed, for any $f(x)\in\Field_{q^{n}}(x)$, the existence of primitive normal pairs $(\epsilon,f(\epsilon))$ together with prescribed trace or norm (or both) has been an interesting domain of research and numerous researchers has worked in the direction \cite{AMS,SA, WS, SH, CW,KHAS}. In this article, we identify those pairs $(q,n)$ such that for $f(x)\in\Field_{q^{n}}(x)$, the field $\Field_{q^{n}}$ contains a primitive normal pair $(\epsilon,f(\epsilon))$ over $\Field_{q}$ such that ${\mathrm{PN}}_{q^n/q}(\epsilon)=a$ for any $a\in\Field_{q}$. 
	
	We will define the following sets, which will have significant importance in this article, for $m_{1},m_{2}\in\mathbb{N}\cup \{0\}$. 
	\begin{enumerate}
		\item[1.]  Let us define $\mathcal{Q}_{q,n}(m_{1},m_{2})$ to be the set contains with the rational functions $f(x)=\frac{f_{1}(x)}{f_{2}(x)}$, where $f_{1}$ and $f_{2}$ co prime irreducible polynomials over $\Field_{q^n}$ such that $x\nmid f_{1},f_{2}$ with $deg(f_{1})=m_{1}$ and $deg(f_{2})=m_{2}$.
		\item[2.] Let $\mathcal{S}_{m_{1},m_{2}}$ appears to be the set containing the pairs $(q,n)\in\mathbb{N}\times\mathbb{N}$ such that for any $f\in\mathcal{Q}_{q,n}(m_{1},m_{2})$, $a\in\Field_{q}$, and any primitive $b\in\Field_q^*$, there exists a primitive normal pair $(\epsilon,f(\epsilon))\in\Field_{q^{n}}^*\times\Field_{q^{n}}^*$ for which $\mathrm{Tr}_{q^n/q}(\epsilon^{-1})=ab^{-1}$ and $\mathrm{N}_{q^n/q}(\epsilon)=b$.\\
		\item[3.] Define , $\mathcal{Q}_{q,n}(m)=\bigcup_{m_{1}+m_{2}=m}\mathcal{Q}_{q,n}(m_{1},m_{2})$ and $\mathcal{S}_{m}=\bigcap_{m_{1}+m_{2}=m}\mathcal{S}_{m_{1},m_{2}}$.
	\end{enumerate}
	
Clearly, $(q,1)\notin \mathcal{S}_{m_{1},m_{2}}$ as in that case we get that $\mathrm{PN}_{q^n/q}(\epsilon)=\epsilon$. Hence $(q,1)$ to be in $\mathcal{S}_{m_{1},m_{2}}$, every pair $(\epsilon,f(\epsilon))$ in $\Field_{q}$ must be primitive normal,  for any $f\in\mathcal{Q}_{q,n}(m_{1},m_{2})$, which is possible only if $q-1$ is prime, that is, if $p=2$. We assume that $f(x)=x+1$. Then it implies that $(1,0)$ is a primitive normal pair. Due to complexity, we have not discussed the cases $n=2,3,4$ in this article, while in future we shall try the remaining cases further.

The structure of this article is as follows. Fundamental notations and definitions that will be used all through this article are mentioned in Section $\ref{S2}$. In Section $\ref{S3}$, we prove a sufficient condition for achieving our main objective. Subsequently, in Section $\ref{S4}$, we introduce the prime sieve condition, which relaxes the sufficient condition. Lastly, we demonstrate the application of the results in the preceding sections  by considering finite fields with characteristic $7$ and $m=3$. Specifically, we derive a subset of $\mathcal{S}_{3}$.
	\setcounter{section}{1}
	\section{Preliminaries}\lb{S2}
	This section presents  a preliminary outline of essential concepts, symbols, and definitions that will be used throughout this article. In this context, $n$ signifies a positive integer, $q$ denotes any prime power, and $\mathbb{F}_{q}$ indicates the finite field containing $q$ elements.
	\begin{defn} (Character).  Let $A$ be an abelain group and $U$ be the subset of complex numbers containing elements on the circle with unit modulus. A character $\chi$ of $A$ is a homomorphism from $A$ into $U$, i.e., $\chi(a_{1}a_{2})=\chi(a_{1})\chi(a_{2})$ for all $a_{1},a_{2}\in A$.
	\end{defn}
	The character $\chi_{1}$ defined by $\chi_{1}(a)=1$ for all $a\in A$, is said to be the trivial character of $A$. Moreover, the collection of all characters of $A$, denoted as $\widehat{A}$, forms a group under multiplication and $A\cong \widehat{A}$. Further, since $\Field_{q^{n}}^*\cong \widehat{\Field_{q^{n}}^*}$, then for any $d|q^n-1$, there are $\phi(d)$ multiplicative characters of order $d$.
	
	In the context of a finite field $\Field_{q^{n}}$, a multiplicative character relates to the multiplicative group $\Field_{q^{n}}^*$, whereas an additive character corresponds to the additive group $\Field_{q^{n}}$. Any multiplicative character can $\chi$, associated to $\Field_{q^{n}}^*$, can be extended to $\Field_{q^{n}}$ by the following rule
	$$
	\chi(0)=\begin{cases}
		
		1, ~\text{if} ~\chi= \chi_{1},  \\
		0, ~\text{if} ~\chi\neq \chi_{1}.
	\end{cases}\\$$
	\begin{defn}
		($e$-free element). Let $\epsilon \in \Field_{q^{n}}^{*}$ and $e$ be any divisor of $q^n-1$. Then, $\epsilon$ is said to be an $e$-free, if $\epsilon=\delta^{d}$, where $\delta \in\Field_{q^{n}}^*$ and $d|e$ implies that $d=1$. Clearly, $\epsilon\in\Field_{q^{n}}^*$ is primitive if and only if $\epsilon$ is $(q^{n}-1)$-free.
	\end{defn}
	For any $e|(q^n-1)$, the characteristic function determining the subset of $e$-free elements of $\Field_{q^n}^*$ is given by 
	\begin{equation}\lb{eq1}
		\begin{aligned}
			\rho_{e}:\Field_{q^n}^*\rightarrow \{0,1\};\epsilon \mapsto{}\theta(e)\sum_{d|e}\frac{\mu(d)}{\phi(d)}\sum_{\chi_{d}}\chi_{d}(\epsilon),
		\end{aligned}
	\end{equation}
	where $\theta(e):=\frac{\phi(e)}{e}$, $\chi_{d}$ represents a multiplicative character of order $d$ in $\widehat{\Field_{q^n}^*}$ and $\mu$ is the M\"{o}bius function.

	The additive group $\Field_{q^{n}}$ becomes an $\Field_{q}[x]$-module according to the following rule.
	$$f\circ\epsilon=\sum_{i=0}^{r}{a_{i}\epsilon^{q^{i}}}; ~\text{for}~ \epsilon\in\Field_{q^{n}}~ \text{and}~ g(x)=\sum_{i=0}^{r}{a_{i}x^{i}}\in \Field_{q}[x].$$ For $\epsilon\in\Field_{q^{n}}$, the $\Field_{q}$-order of $\epsilon$ is the monic $\Field_{q}$-divisor $f$ of $x^n-1$ of minimal degree such
	that $f\circ \epsilon= 0$.
	\begin{defn}($g$-free element).
	Let $g|x^n-1$ and $\epsilon\in\Field_{q^{n}}$. Suppose that for any $h|g$ and $\delta\in\Field_{q^{n}}$, $\epsilon=h\circ\delta$ implies $h=1$. Then $\epsilon\in\Field_{q^{n}}$ is said to be $g$-free. It is straightforward to observe that, any element $\epsilon\in\Field_{q^{n}}$ is normal if and only if $\epsilon$ is $(x^n-1)$-free.
	\end{defn}
	For any $g|x^n-1$, the characteristic function determining the subset of $g$-free elements in $\Field_{q^{n}}$ is given by
	\begin{equation}\lb{eq2}
		\begin{aligned}
			\kappa_{g}:\Field_{q^n}\mapsto \{0,1\};\epsilon\xrightarrow{}\Theta(g)\sum_{h|g}\frac{\mu_{q}(h)}{\Phi_{q}(h)}\sum_{\la_{h}}\la_{h}(\epsilon),
		\end{aligned}
	\end{equation}
	where $\Theta(g):=\frac{\Phi_{q}(g)}{q^{deg(g)}}$, $\la_{h}$ stands for any additive character of $\Field_{q}$-order $h$ in $\widehat{\Field_{q^n}}$ and $\mu_{q}$ is the M\"{o}bius function for the set of polynomials over $\Field_{q}$ is defined as follows:
	
	$$
	\mu_{q}(f)=\begin{cases}
		
		(-1)^{r}, ~\text{if $f$ is product of $r$ distinct monic irreducible polynomials},  \\
		0,\;\;\;\;\;\;\;\;\;\text{otherwise}.
	\end{cases}\\$$

For any $a\in\Field_{q}$, the characteristic function for the subset of $\Field_{q^n}$ containing elements with $\mathrm{Tr}_{{q^n}/q}(\epsilon)=a$ is given as follows:
	\begin{equation}\nonumber
		\tau_{a}:\Field_{q^n}\rightarrow \{0,1\};\epsilon \mapsto \frac{1}{q}\sum_{\la\in{\widehat{{\Field}_{q}}}}^{}\la(\mathrm{Tr}_{{q^n}/q}(\epsilon)-a).
	\end{equation}
 Now, any additive character $\la$ of $\Field_{q}$ can be represented using the canonical additive character $\la_{0}$ as $\la(\epsilon)=\la_{0}(t\epsilon)$, where $t$ is an element of $\Field_{q}$ that corresponds to $\la$. Thus
	\begin{equation}\lb{eq3}
		\begin{aligned}
			\tau_{a}(\epsilon)&= \frac{1}{q}\sum_{t\in\Field_{q}}^{}\la_{0}(\mathrm{Tr}_{{q^n}/q}(t\epsilon)-ta)\\
			&=\frac{1}{q}\sum_{t\in\Field_{q}}^{}{\widehat{\la}_{0}}(t\epsilon)\la_{0}(-ta),
		\end{aligned}
	\end{equation}
where $\widehat{\la}_{0}$ is the additive character of $\Field_{q^n}$, that is given by $\widehat{\la}_{0}(\epsilon)=\la_{0}(\mathrm{Tr}_{{q^n}/q}(\epsilon))$.
	
	Moreover, for $c\in\Field_q^*$, the characteristic function for the subset of $\Field_{q^n}^*$ containing elements with $\mathrm{N}_{{q^n}/q}(\epsilon)=c$ is defined as follows:
	\begin{equation}\nonumber
		\begin{aligned}
			\eta_{c}:\Field_{q^n}^*\rightarrow \{0,1\};\epsilon \mapsto \frac{1}{q-1}\sum_{\chi\in{\widehat{{\Field}_{q}^*}}}^{}\chi(\mathrm{N}_{{q^n}/q}(\epsilon)c^{-1}).
		\end{aligned}
	\end{equation} 
	Let $\chi_{q-1}$ be a multiplicative character of order $q-1$. Thus any $\chi$ of $\widehat{{\Field}_{q}^*}$ can be expressed in terms of $\chi_{q-1}$ as $\chi(\epsilon)=\chi_{q-1}(\epsilon^{i})$ for some positive integer $i\in\{1,2,\ldots,q-1\}$. Thus
	\begin{equation}
		\begin{aligned}\lb{eq4}
			\eta_{c}(\epsilon)&=\frac{1}{q-1}\sum_{i=1}^{q-1}\chi_{q-1}^{i}(\mathrm{N}_{{q^n}/q}(\epsilon)c^{-1})\\
			&=\frac{1}{q-1}\sum_{i=1}^{q-1}\tilde{\chi}^{i}(\epsilon)\chi_{q-1}(a^{-i})
		\end{aligned}
	\end{equation} 
	where $\tilde{\chi}=\chi_{q-1}\circ \mathrm{N}_{{q^n}/q}$ is a multiplicative character of $\Field_{q^n}^*$. Following \cite{AMS}, the order of $\tilde{\chi}$ is $q-1$ and there exists a multiplicative character $\chi_{q^{n}-1}$ of order $q^{n}-1$ such that $\tilde{\chi}=\chi_{q^n-1}^{{q^{n}-1}/{q-1}}$.

	The following lemmas has proved by Wan and Fu {\cite{LDQ}} and are crucial for proving our main result as well as the modified prime sieve.
	\begin{lem}\textbf{({\cite{LDQ}}, Theorem 5.5)}\lb{L1}
	Consider $f(x)=\prod_{i=1}^{r}f_{i}(x)^{a_{i}}\in\mathbb{F}_{q^n}(x)$ be such that $f_{i}$'s are irreducible polynomials over $\Field_{q^{n}}$ and $a_{i}$'s are nonzero integers. Suppose that $\chi\in\widehat{{\Field}_{q}^*}$ be a multiplicative character having order $d$. Further, assume $f(x)$ to be a rational function, which is not equal to $h(x)^{d}$, for $h(x)\in\mathbb{F}_{q^n}(x)$, the of rational functions. Then  $$\Bigg|\sum_{\epsilon\in\Field_{q^n}, f(\epsilon)\neq 0,\infty }^{}\chi(f(\epsilon))\Bigg|\leq\Bigg(\sum_{i=1}^{r}deg(f_{i})-1\Bigg)q^{n/2}.$$		
	\end{lem}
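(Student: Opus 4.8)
The inequality is Weil's bound for multiplicative character sums of rational functions, and I would prove it by first reducing the rational-function case to the polynomial case and then invoking the classical Weil estimate, whose proof goes through the Riemann Hypothesis for curves over finite fields. (I treat $\chi$ as a multiplicative character of order $d$ of the group $\Field_{q^n}^*$ acting on the summation variable, extended by $\chi(0)=0$; if $\chi$ is originally given on $\Field_q^*$ one first composes with $\mathrm{N}_{q^n/q}$. Since the hypothesis ``$f$ is not a $d$-th power'' is vacuous for $d=1$, we may assume $d\ge 2$.)

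\emph{Step 1 (reduction to a polynomial).} Because $\chi(g)^{d}=1$ for $g\in\Field_{q^n}^*$ we have $\chi(g)^{-1}=\chi(g^{d-1})$, so whenever $f_2(\epsilon)\neq 0$,
\[
\chi\bigl(f(\epsilon)\bigr)=\chi\bigl(f_1(\epsilon)\bigr)\,\chi\bigl(f_2(\epsilon)\bigr)^{d-1}=\chi\bigl(F(\epsilon)\bigr),\qquad F(x):=f_1(x)\,f_2(x)^{d-1}\in\Field_{q^n}[x].
\]
If $F=h^{d}$ for some $h\in\Field_{q^n}(x)$ then $f=f_1 f_2^{d-1}/f_2^{d}=(h/f_2)^{d}$, contradicting the hypothesis, so $F$ is not a $d$-th power. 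At each $\epsilon$ with $f_1(\epsilon)f_2(\epsilon)=0$ one has $\chi(F(\epsilon))=\chi(0)=0$, and these are precisely the $\epsilon$ omitted from the left-hand sum (zeros of $f$, where $f_1$ vanishes, and poles of $f$, where $f_2$ vanishes); hence
\[
\sum_{\substack{\epsilon\in\Field_{q^n}\\ f(\epsilon)\neq 0,\infty}}\chi\bigl(f(\epsilon)\bigr)=\sum_{\epsilon\in\Field_{q^n}}\chi\bigl(F(\epsilon)\bigr).
\]
Since $f_1,f_2$ are coprime, $F$ and $\prod_{i=1}^{r}f_i$ have the same set of distinct roots, so $F$ has exactly $N:=\sum_{i=1}^{r}\deg f_i$ of them.

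\emph{Step 2 (the polynomial Weil bound, and the main obstacle).} It remains to prove $\bigl|\sum_{\epsilon\in\Field_{q^n}}\chi(F(\epsilon))\bigr|\le (N-1)\,q^{n/2}$, which is the classical theorem of Weil (see \cite{LDQ}; cf.\ also Lidl and Niederreiter, \emph{Finite Fields}, Thm.~5.41, or Stepanov's elementary argument for a proof avoiding algebraic geometry). The mechanism: $d\mid q^n-1$ forces $\gcd(d,p)=1$, so the Kummer curve $\mathcal{C}:y^{d}=F(x)$ over $\Field_{q^n}$ is tamely ramified; expressing its number of affine $\Field_{q^n}$-points as $\sum_{\epsilon}\sum_{j=0}^{d-1}\chi^{j}(F(\epsilon))$ and comparing with the count predicted by the Riemann Hypothesis for the smooth projective model of $\mathcal{C}$ isolates $\sum_{\epsilon}\chi(F(\epsilon))=-\sum_{j}\alpha_{j}$, where the $\alpha_{j}$ are the inverse roots of the $L$-function $L(T,\chi)$; each satisfies $|\alpha_{j}|=q^{n/2}$ by Weil, and their number equals $\deg L(T,\chi)$. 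The genuinely delicate point — where I expect to spend the effort — is the sharp degree estimate $\deg L(T,\chi)\le N-1$: this comes from the conductor--discriminant formula (equivalently, Riemann--Hurwitz for $\mathcal{C}\to\mathbb{P}^{1}$), the conductor of $\chi$ being supported only at the zeros and poles of $F$ and possibly at $x=\infty$, for a total degree $\le N$, with the closing ``$-1$'' forced by the product formula $\sum_{P}v_{P}(F)\deg P=0$ (and a careful treatment of $x=\infty$ according to whether $d\mid\deg F$). Since the statement is quoted verbatim as Theorem~5.5 of \cite{LDQ}, in the article one simply cites it; Step 1 above is what reduces the rational-function form to the familiar polynomial one.
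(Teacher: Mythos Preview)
The paper does not prove this lemma at all; it is quoted verbatim from Fu--Wan \cite{LDQ}, Theorem~5.5, and used as a black box throughout Section~\ref{S3}. There is therefore nothing in the paper to compare your argument against.

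That said, your sketch is a correct outline of the classical proof. The reduction in Step~1 is sound: with $f=\mathfrak{n}/\mathfrak{d}$ written as a ratio of coprime polynomials, setting $F=\mathfrak{n}\,\mathfrak{d}^{\,d-1}$ gives $\chi(f(\epsilon))=\chi(F(\epsilon))$ at every non-pole, the omitted points contribute $0$ since $\chi(0)=0$, and your contrapositive correctly shows $F$ is not a $d$-th power. The distinct roots of $F$ in the algebraic closure are exactly those of $\prod_{i}f_i$, so their number is $N=\sum_i\deg f_i$. Step~2 then invokes the polynomial Weil bound $(N-1)q^{n/2}$, and your explanation via the Kummer cover $y^d=F(x)$, the $L$-function factorisation, and the Riemann--Hurwitz/conductor computation giving $\deg L(T,\chi)\le N-1$ is the standard route. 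You also correctly flag the ambiguity in the statement (the lemma writes $\chi\in\widehat{\Field_q^*}$, which should be $\widehat{\Field_{q^n}^*}$, or else composed with the norm). One purely notational hazard: you reuse $f_1,f_2$ for numerator and denominator, colliding with the lemma's $f_1,\ldots,f_r$ for the irreducible factors; in a clean write-up pick different symbols.
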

	\begin{lem}\textbf{({\cite{LDQ}}, Theorem 5.6)}\lb{L2}
	Let $f(x)$, $g(x)$ $\in\mathbb{F}_{q^n}(x)$ be rational functions over $\mathbb{F}_{q^n}$. Express $f(x)$ as $\prod_{i=1}^{r} f_{i}(x)^{a_i}$, where each $f_i(x)$ is an irreducible polynomial over the field $\mathbb{F}_{q^n}$, and $a_i$'s are nonzero integers $(i=1,2,\ldots,r)$. Let $D_{1}=\sum_{i=1}^{r}deg(f_{i})$, $D_{2}=max(deg(g(x)),0)$, $D_{3}$ represents the degree of the denominator of $g(x)$ and $D_{4}$ denotes the sum of the degrees of irreducible polynomials dividing the denominator of $g(x)$ (excluding those equal to $f_{i}(x)$, for $i=1,2,\ldots,r$). Consider $\chi$ as a multiplicative character of $\Field_{q^n}^{*}$ and $\lambda$ as a nontrivial additive character of $\Field_{q^{n}}$. Further, assume that $g(x)\neq h(x)^{q^n}-h(x)$, where $h(x)\in\Field_{q^n}(x)$. Then we have
	\begin{equation} \nonumber
		\begin{aligned}
			\Bigg|\sum_{\epsilon \in \Field_{q^{n}}, f(\epsilon)\neq 0,\infty, g(\epsilon)\neq\infty }\chi(f(\epsilon))\la(g(\epsilon))\Bigg|\leq(D_{1}+D_{2}+D_{3}+D_{4}-1)q^{n/2}.
		\end{aligned}
	\end{equation}
	\end{lem}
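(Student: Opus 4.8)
Since Lemma~\ref{L2} is quoted from~\cite{LDQ}, I will only sketch the strategy; the plan is to deduce a mixed character-sum estimate of this type from Weil's Riemann Hypothesis for curves over finite fields (equivalently, Deligne's weight bounds). Write $S=\sum_{\epsilon}\chi(f(\epsilon))\la(g(\epsilon))$ for the sum in the statement, the summation running over the $\mathbb{F}_{q^n}$-points of the open curve $U=\mathbb{A}^1_{\mathbb{F}_{q^n}}\setminus\{\text{zeros and poles of }f,\ \text{poles of }g\}$. The function $\epsilon\mapsto\chi(f(\epsilon))\la(g(\epsilon))$ is the trace function of the rank-one lisse $\ell$-adic sheaf $\mathcal{F}=\mathcal{L}_{\chi}(f)\otimes\mathcal{L}_{\la}(g)$ on $U$, where $\mathcal{L}_{\chi}(f)$ is the Kummer sheaf pulled back along $f$ (tamely ramified, with local monodromy $\chi^{v}$ about a zero or pole of $f$ of multiplicity $v$) and $\mathcal{L}_{\la}(g)$ is the Artin--Schreier sheaf pulled back along $g$ (wildly ramified at the poles of $g$). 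The Grothendieck--Lefschetz trace formula then gives $S=-\operatorname{Tr}\bigl(\operatorname{Frob}_{q^n}\mid H^1_c(U_{\overline{\mathbb{F}}_{q^n}},\mathcal{F})\bigr)$, since $H^0_c=0$ ($U$ being affine) and $H^2_c=0$ once one knows $\mathcal{F}$ is geometrically non-trivial. Equivalently and more classically, $S$ can be read off from the number of $\mathbb{F}_{q^n}$-points of the fibre product of the Kummer covering $y^{d}=f(x)$ with the Artin--Schreier covering attached to $\la\circ g$, and one then invokes $|N-(q^{n}+1)|\le 2\gamma\,q^{n/2}$, $\gamma$ being the genus of that curve.

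The non-triviality hypotheses are exactly what makes the cohomological input go through: that $f$ is not a $d$-th power in $\mathbb{F}_{q^n}(x)$ (with $d$ the order of $\chi$) forces $\mathcal{L}_{\chi}(f)$, hence $\mathcal{F}$, to be geometrically non-trivial; the hypothesis $g\neq h^{q^n}-h$ rules out the degenerate situation where $\la\circ g$ is identically trivial on $\mathbb{F}_{q^n}$; together these ensure $H^0_c=H^2_c=0$. Granting this, every eigenvalue of $\operatorname{Frob}_{q^n}$ on $H^1_c(U_{\overline{\mathbb{F}}_{q^n}},\mathcal{F})$ has absolute value at most $q^{n/2}$ (a pure rank-one sheaf of weight $0$), so $|S|\le \dim H^1_c(U_{\overline{\mathbb{F}}_{q^n}},\mathcal{F})\cdot q^{n/2}$. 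Since $H^0_c=H^2_c=0$, this dimension is $-\chi_c(U_{\overline{\mathbb{F}}_{q^n}},\mathcal{F})$, and the Grothendieck--Ogg--Shafarevich (Euler--Poincaré) formula, using $\operatorname{rank}\mathcal{F}=1$, gives $\dim H^1_c=\#(\mathbb{P}^1\setminus U)-2+\sum_{x\in\mathbb{P}^1\setminus U}\operatorname{Swan}_x(\mathcal{F})$.

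The remaining step, and the main obstacle, is the local bookkeeping that turns the last identity into the bound $D_1+D_2+D_3+D_4-1$. The set $\mathbb{P}^1\setminus U$ consists of the at most $D_1$ distinct zeros and poles of $f$ in $\mathbb{A}^1$, the at most $D_4$ poles of $g$ in $\mathbb{A}^1$ that are not already among them, and the point $\infty$; this is precisely why $D_4$ is defined with the clause excluding the $f_i$, so as not to count common points twice. Since the Kummer factor is everywhere tame, $\operatorname{Swan}_x(\mathcal{F})=\operatorname{Swan}_x(\mathcal{L}_{\la}(g))$, which is at most the order of the pole of $g$ at $x$; the finite poles of $g$ therefore contribute at most $D_3=\deg(\text{denominator of }g)$ to the Swan sum, and the pole at $\infty$ contributes at most $\max(\deg g,0)=D_2$. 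Collecting, $\dim H^1_c\le (D_1+D_4+1)-2+(D_2+D_3)=D_1+D_2+D_3+D_4-1$, which is the asserted inequality. In a complete argument the delicate points are: the vanishing of $H^2_c$ (that $\mathcal{F}$ is not geometrically constant), the precise local monodromy of $\mathcal{L}_{\chi}(f)$ and the Swan conductors of $\mathcal{L}_{\la}(g)$ at the finite poles of $g$ and at $\infty$ --- with the separate cases $\deg g>0$ and $\deg g\le 0$ --- and the careful census of $\mathbb{P}^1\setminus U$; all of this is carried out in~\cite{LDQ}.
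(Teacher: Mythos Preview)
The paper does not give its own proof of this lemma at all; it is simply quoted as Theorem~5.6 of~\cite{LDQ} and used as a black box. Your sketch of the $\ell$-adic cohomological argument (Kummer/Artin--Schreier sheaves, Grothendieck--Lefschetz, weight bounds, and the Grothendieck--Ogg--Shafarevich bookkeeping for the Swan conductors) is essentially the argument of Fu and Wan in~\cite{LDQ}, so there is nothing to compare against in the present paper --- you have supplied more than the authors do, and what you supplied is correct in outline.
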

For $l$, a positive integer (or a monic polynomial over $\Field_{q}$), we use $\om(l)$ to represent  the cardinality of the set which contains distinct prime divisors (irreducible factors ) of $l$ and $W(l)$ to represent the cardinality of the set which contains square-free divisors (square-free factors) of $l$, that is $W(l)=2^{\om(l)}$.
	\begin{lem}\textbf{({\cite{SS}}, Lemma 3.7)}\lb{L2.3}
		Let $r>0$ be a real number and $m$ be a positive integer. Then $W(m)<\mathcal{C}\cdot m^{\frac{1}{r}}$, where $\mathcal{C}=\frac{2^w}{{({p_{1}p_{2}\ldots p_{w})}}^{\frac{1}{r}}}$ and $p_{1},p_{2},\ldots,p_{w}$ are primes $\leq 2^{r}$ that divide $m$.
	\end{lem}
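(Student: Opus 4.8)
The plan is to establish the estimate by a direct multiplicative argument that separates the prime divisors of $m$ according to whether they lie below or above the threshold $2^r$. First I would write the prime factorization $m = p_1^{a_1}\cdots p_w^{a_w}\, q_1^{b_1}\cdots q_s^{b_s}$, where $p_1,\dots,p_w$ are precisely the prime divisors of $m$ not exceeding $2^r$ and $q_1,\dots,q_s$ are the prime divisors exceeding $2^r$, all exponents being at least $1$. Then $\om(m) = w+s$, so $W(m) = 2^{\om(m)} = 2^w\cdot 2^s$. Since $\mathcal{C}\cdot m^{1/r} = 2^w\big(m/(p_1\cdots p_w)\big)^{1/r}$, proving the lemma reduces to showing $2^s \le \big(m/(p_1\cdots p_w)\big)^{1/r}$, with strict inequality in all but a degenerate case.

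The second step is the elementary estimate $m/(p_1\cdots p_w) = p_1^{a_1-1}\cdots p_w^{a_w-1}\, q_1^{b_1}\cdots q_s^{b_s} \ge q_1\cdots q_s$, which is immediate from $a_i\ge 1$ and $b_j\ge 1$. For the third step, each $q_j > 2^r$ forces $q_j^{1/r} > 2$; multiplying these inequalities over $j = 1,\dots,s$ gives $\big(q_1\cdots q_s\big)^{1/r} > 2^s$, and combining with the second step yields $\big(m/(p_1\cdots p_w)\big)^{1/r} > 2^s$ whenever $s\ge 1$. When $s = 0$ but some $a_i\ge 2$ the same chain of inequalities still gives a strict inequality, and the only remaining possibility $m = p_1\cdots p_w$ produces exact equality $W(m) = \mathcal{C}\cdot m^{1/r}$, which is harmless for the intended applications (or can be absorbed by a trivial loosening of $\mathcal{C}$).

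The argument is entirely elementary and I do not anticipate a genuine obstacle; the only point demanding care is the bookkeeping of empty-product conventions — in particular the cases $w = 0$, where $\mathcal{C} = 1$, and $m = 1$ — together with the strictness of the inequality in the boundary situation described above.
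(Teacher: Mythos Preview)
The paper does not prove this lemma at all: it is stated with a citation to Cohen--Huczynska \cite{SS}, Lemma~3.7, and no argument is given. Your multiplicative splitting of the prime divisors of $m$ at the threshold $2^r$ is the standard proof of this estimate and is correct, including your identification of the boundary case $m=p_1\cdots p_w$ (and $m=1$) where equality rather than strict inequality holds; as you note, this is immaterial for the applications in the paper.
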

	\begin{lem}\textbf{({\cite{HWRJ}}, Lemma 2.9)}\lb{L2.4}
	Suppose that $q$ be a prime power, $n$ be a natural number and $n'$=gcd$(n,q-1)$. Then we have $W(x^n-1)\leq 2^{\frac{1}{2}\{n+n'\}}$, which gives $W(x^n-1)\leq 2^n$. Further, $W(x^n-1)=2^n$ if and only if $n|q-1$. In addition, if  $n\nmid q-1$, then $W(x^n-1)\leq 2^{\frac{3}{4}n}$.
	\end{lem}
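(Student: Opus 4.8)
The plan is to prove the statement by an elementary analysis of the factorisation of $x^{n}-1$ over $\Field_q$, since $W(x^{n}-1)=2^{\om(x^{n}-1)}$ and everything reduces to bounding the number $\om(x^{n}-1)$ of distinct monic irreducible factors. First I would strip off the inseparable part: writing $n=p^{a}m$ with $p$ the characteristic of $\Field_q$ and $p\nmid m$, one has $x^{n}-1=(x^{m}-1)^{p^{a}}$, so $\om(x^{n}-1)=\om(x^{m}-1)$, and also $n'=\gcd(n,q-1)=\gcd(m,q-1)$ because $p\nmid q-1$. Thus it suffices to estimate $\om(x^{m}-1)$ and then use $m\le n$.

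Next I would count the irreducible factors through the roots of unity. Over a fixed algebraic closure of $\Field_q$ the polynomial $x^{m}-1$ is separable and has exactly $m$ roots, namely the $m$-th roots of unity; the distinct monic irreducible factors of $x^{m}-1$ over $\Field_q$ are in bijection with the orbits of the Frobenius $x\mapsto x^{q}$ on this set of roots, the degree of a factor being the length of the corresponding orbit. In particular the \emph{linear} factors correspond to the fixed points of Frobenius, i.e.\ to the $m$-th roots of unity lying in $\Field_q$; these form the unique subgroup of $\Field_q^{*}$ of order $\gcd(m,q-1)=n'$, so there are exactly $n'$ of them, and every other irreducible factor of $x^{m}-1$ has degree at least $2$.

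The bound is then a degree count. Since $\sum_{\pi}\deg\pi=m$, where $\pi$ runs over the distinct monic irreducible factors of $x^{m}-1$, splitting off the $n'$ linear factors leaves total degree $m-n'$ distributed among factors of degree $\ge 2$, so there are at most $(m-n')/2$ of the latter. Hence $\om(x^{n}-1)=\om(x^{m}-1)\le n'+\tfrac12(m-n')=\tfrac12(m+n')\le\tfrac12(n+n')$, which gives $W(x^{n}-1)\le 2^{(n+n')/2}$; since $n'\mid n$ this yields $W(x^{n}-1)\le 2^{n}$. For the equality case, $W(x^{n}-1)=2^{n}$ forces $m=n$ and $n'=n$, i.e.\ $p\nmid n$ and $n\mid q-1$, equivalently $n\mid q-1$; conversely, if $n\mid q-1$ then $x^{n}-1$ splits into $n$ distinct linear factors and $W(x^{n}-1)=2^{n}$. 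Finally, if $n\nmid q-1$ then $n'=\gcd(n,q-1)$ is a proper divisor of $n$, hence $n'\le n/2$, and therefore $\tfrac12(n+n')\le\tfrac34 n$, giving $W(x^{n}-1)\le 2^{3n/4}$.

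Given how explicit this argument is, there is no genuine obstacle; the only points demanding care are the reduction to the separable case (checking that the inseparable exponent $p^{a}$ leaves the count of distinct factors unchanged) and the identification of the linear factors with the order-$n'$ subgroup of $\Field_q^{*}$, which is what makes the refinement from $2^{n}$ to $2^{(n+n')/2}$ work.
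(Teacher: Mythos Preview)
The paper does not prove this lemma at all; it is quoted verbatim from Lenstra--Schoof \cite{HWRJ} (their Lemma 2.9) and used as a black box. Your argument is a correct and complete elementary proof: the reduction to the separable part, the identification of linear factors with the subgroup of order $n'=\gcd(n,q-1)$ in $\Field_q^{*}$, the degree count giving $\om(x^{n}-1)\le\tfrac12(n+n')$, the equality analysis, and the observation that a proper divisor $n'$ of $n$ satisfies $n'\le n/2$ are all sound. So there is nothing to compare against---you have supplied the proof the paper omits.
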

	We know that norm of a primitive element is also primitive. Moreover, Sharma et al. {{\cite{AMS}}} has proved the following lemma in a more general context.
	\begin{lem}\textbf{({\cite{AMS}}, Lemma 3.1)}\lb{L5}
		Let $e$ be a positive divisor of $q^n-1$ and $\delta=$gcd($e,q-1$). Additionally, let $Q_{e}$ represents the largest divisor of $e$ for which gcd($Q_{e},\delta$)=1. Then an element $\epsilon\in\Field_{q^n}^*$ is $e$-free $\iff$ $\epsilon$ is $Q_{e}$-free and $N_{{q^n}/q}(\epsilon)$ is $\delta$-free. 
	\end{lem}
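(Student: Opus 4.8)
The plan is to move everything into the cyclic structure of $\Field_{q^n}^*$ and reduce each ``$d$-free'' condition to a coprimality condition on a discrete logarithm. First I would fix a generator $g$ of $\Field_{q^n}^*$ and write $\epsilon=g^{k}$. The basic observation is that for any divisor $d$ of $q^{n}-1$ the element $\epsilon$ is a $d$-th power in $\Field_{q^n}^*$ if and only if $d\mid k$, since the $d$-th powers form the subgroup $\langle g^{d}\rangle$ (note $\gcd(d,q^{n}-1)=d$ as $d\mid q^{n}-1$). Unwinding the definition of an $e$-free element, it follows that $\epsilon$ is $e$-free $\iff$ $\gcd(k,e)=1$, and likewise $\epsilon$ is $Q_{e}$-free $\iff$ $\gcd(k,Q_{e})=1$.

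Next I would treat the norm. Since $\mathrm{N}_{q^n/q}(\epsilon)=\epsilon^{(q^{n}-1)/(q-1)}=\bigl(g^{(q^{n}-1)/(q-1)}\bigr)^{k}$ and $g^{(q^{n}-1)/(q-1)}$ generates the unique subgroup of order $q-1$, which is exactly $\Field_{q}^{*}$, the same argument carried out inside $\Field_{q}^{*}$ shows that for $\delta\mid q-1$ the element $\mathrm{N}_{q^n/q}(\epsilon)$ is $\delta$-free in $\Field_{q}^{*}$ $\iff$ $\gcd(k,\delta)=1$.

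It then remains to reconcile $\gcd(k,e)=1$ with the pair of conditions $\gcd(k,Q_{e})=1$ and $\gcd(k,\delta)=1$. Writing $e=Q_{e}\,e'$ with $e'=e/Q_{e}$, I would verify from the definitions of $\delta=\gcd(e,q-1)$ and of $Q_{e}$ (the largest divisor of $e$ coprime to $\delta$) that $\gcd(Q_{e},e')=1$ and that $e'$ and $\delta$ have the same set of prime divisors, because a prime $p$ divides $e'$ precisely when $p\mid e$ and $p\mid\delta$, i.e.\ precisely when $p\mid\delta$. Since coprimality to an integer depends only on its radical, this gives $\gcd(k,e)=1\iff\gcd(k,Q_{e})=1$ and $\gcd(k,e')=1\iff\gcd(k,Q_{e})=1$ and $\gcd(k,\delta)=1$, and combining with the two translations above proves the equivalence.

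The only delicate point is this radical comparison: $e/Q_{e}$ need not equal $\delta$ (their prime-power exponents may differ), so one must argue at the level of which primes occur rather than attempting the naive factorization $e=Q_{e}\,\delta$. I would also be careful to read ``$\mathrm{N}_{q^n/q}(\epsilon)$ is $\delta$-free'' inside $\Field_{q}^{*}$, the natural ambient group of the norm, so that the discrete-logarithm translation in the second paragraph is immediate.
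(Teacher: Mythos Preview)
The paper does not supply its own proof of this lemma; it is quoted from \cite{AMS} without argument, so there is nothing to compare your approach against. Your discrete-logarithm reduction is correct and is the standard way to see the statement: writing $\epsilon=g^{k}$ turns $e$-freeness into $\gcd(k,e)=1$, the norm computation turns $\delta$-freeness of $\mathrm{N}_{q^n/q}(\epsilon)$ (read in $\Field_q^{*}$) into $\gcd(k,\delta)=1$, and the radical comparison between $e/Q_e$ and $\delta$ finishes the equivalence. Your caution about interpreting ``$\delta$-free'' inside $\Field_q^{*}$ rather than $\Field_{q^n}^{*}$ is well placed---under the latter reading the norm would fail to be $\delta$-free whenever $\gcd\bigl((q^{n}-1)/(q-1),\delta\bigr)>1$, so the intended ambient group is indeed $\Field_q^{*}$.
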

	\section{Main Result}\lb{S3}
	Let $e_{1},e_{2}|q^n-1$ and $g_{1},g_{2}|x^n-1$. Let $\delta=$gcd($e_{1},q-1$) and $Q_{e_{1}}$ be the largest divisor of $e_{1}$ such that gcd($Q_{e_{1}},\delta$)$=1$. Let $m_{1},m_{2}\in\mathbb{N}\cup \{0\}$ be such that $1\leq m_{1}+m_{2}<q^n$. Also, let $f(x)\in\mathcal{Q}_{q,n}(m)$, $a\in\Field_{q}$ and $b\in\Field_{q}^*$ be $\delta$-free element. We denote $\mathfrak{M}_{f,a,b}(Q_{e_{1}},e_{2},g_{1},g_{2})$ as the number of $\epsilon\in\Field_{q^{n}}^*$ such that $\epsilon$ is $Q_{e_{1}}$-free, $g_{1}$-free and $f(\epsilon)$ is $e_{2}$-free, $g_{2}$-free with $\mathrm{Tr}_{q^{n}/q}(\epsilon^{-1})=ab^{-1}$ and $\mathrm{N}_{q^{n}/q}(\epsilon)=b$. Let us abbreviate, $Q:=Q_{q^n-1}$. 
	
	We hereby prove the following inequality like sufficient condition.
	\begin{thm}\lb{T3.1}
		Let $n,m,q\in\mathbb{N}$ be such that $q$ is a prime power and $n\geq 5$. Assume that
		$$q^{\frac{n}{2}-2}>
		(2m+2)W(Q){W(q^n-1)}{W(x^n-1)}^2.$$
		Then $(q,n)\in\mathcal{S}_{m}$.
	\end{thm}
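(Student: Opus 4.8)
The plan is to reduce the existence of the desired primitive normal pair to a weighted character-sum estimate, in the standard style of Lenstra--Schoof / Cohen arguments. Fix $f \in \mathcal{Q}_{q,n}(m_1,m_2)$ with $m_1+m_2=m$, $a\in\Field_q$, and a primitive $b\in\Field_q^*$; by Lemma \ref{L5} it is enough to count $\epsilon\in\Field_{q^n}^*$ that are $Q$-free with $\mathrm{N}_{q^n/q}(\epsilon)=b$ (which upgrades $Q$-freeness to $(q^n-1)$-freeness, i.e.\ primitivity, since $b$ is $\delta$-free), that are $(x^n-1)$-free (normal), such that $f(\epsilon)$ is $(q^n-1)$-free and $(x^n-1)$-free, and such that $\mathrm{Tr}_{q^n/q}(\epsilon^{-1})=ab^{-1}$. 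Write this count as
\begin{equation}\nonumber
N=\sum_{\epsilon}\rho_{q^n-1}(\epsilon)\,\rho_{q^n-1}(f(\epsilon))\,\kappa_{x^n-1}(\epsilon)\,\kappa_{x^n-1}(f(\epsilon))\,\tau_{ab^{-1}}(\epsilon^{-1})\,\eta_{b}(\epsilon),
\end{equation}
where the sum is over $\epsilon\in\Field_{q^n}^*$ avoiding the finitely many zeros/poles of $f$; showing $N>0$ suffices.

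Next I would expand each characteristic function via \eqref{eq1}, \eqref{eq2}, \eqref{eq3}, \eqref{eq4}, interchange the sums, and isolate the "main term'' coming from the tuple of trivial characters $(d_1,d_2,h_1,h_2,t,i)=(1,1,1,1,0,1)$ (more precisely $t=0$ from $\tau$ and the trivial $\chi$ from $\eta$). That main term equals $\theta(q^n-1)^2\,\Theta(x^n-1)^2\,\frac{1}{q}\,\frac{1}{q-1}\,q^n$ up to a negligible adjustment for the excluded points, and it is positive. Every other tuple contributes a term of the form
\begin{equation}\nonumber
\theta(q^n-1)^2\Theta(x^n-1)^2\frac{1}{q(q-1)}\cdot(\text{Möbius/character factors})\cdot\sum_{\epsilon}\chi(\epsilon)\,\chi'(f(\epsilon))\,\la(\epsilon)\,\la'(f(\epsilon))\,\widehat{\la}_0(t\epsilon^{-1})\,\tilde\chi^{i}(\epsilon),
\end{equation}
which, after combining the multiplicative characters into a single $\chi^*$ applied to a rational function built from $\epsilon$, $f(\epsilon)$ and $\epsilon^{-1}$, and the additive part into $\la^*$ applied to another rational function, is bounded by Lemma \ref{L2} (or Lemma \ref{L1} when no additive character is present). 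The key point is the degree bookkeeping: the relevant $D_1+D_2+D_3+D_4$ is at most $2m+2$ (the $f_i$ of $f$ contribute $\le m$ on the multiplicative side; $f(\epsilon)$, $\epsilon$, $\epsilon^{-1}$ contribute the remaining bounded amount to $D_2,D_3,D_4$), so each such sum is at most $(2m+1)q^{n/2}$, and one checks the character arguments are genuinely non-trivial / not perfect powers so the lemmas apply.

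Then I would collect the bound: summing over all divisors $d_1,d_2\mid q^n-1$, $h_1,h_2\mid x^n-1$, over $t\in\Field_q$ and $i$, the number of non-principal tuples is at most $W(q^n-1)^2 W(x^n-1)^2 q(q-1)$ once the $\frac{1}{q(q-1)}$ is used to cancel the $t$- and $i$-ranges, but more carefully one keeps $W(Q)$ for the $\epsilon$-side multiplicative character (using Lemma \ref{L5} to replace $W(q^n-1)$ by $W(Q)\cdot(\text{small})$ on that factor) and $W(q^n-1)$ for the $f(\epsilon)$-side, yielding the stated product $W(Q)W(q^n-1)W(x^n-1)^2$. Hence $N>0$ is guaranteed as soon as
\begin{equation}\nonumber
q^n>(2m+1)\,q^{n/2}\cdot W(Q)\,W(q^n-1)\,W(x^n-1)^2\cdot q\,(q-1)\cdot(\text{constant }\le \tfrac{2m+2}{2m+1}),
\end{equation}
and absorbing $q(q-1)<q^2$ and the small constant gives exactly $q^{n/2-2}>(2m+2)W(Q)W(q^n-1)W(x^n-1)^2$. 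I expect the main obstacle to be the careful treatment of the excluded zeros and poles of $f$ (so that the "main term'' stays positive and the error count of excluded points is genuinely absorbed into the $O(m\,q^{n/2})$ estimate rather than the main term) and the verification — case by case on which of the two multiplicative characters, and the additive character, are trivial — that the rational functions fed into Lemmas \ref{L1} and \ref{L2} satisfy the non-degeneracy hypotheses (not a $d$-th power; not of the form $h^{q^n}-h$), since $f=f_1/f_2$ with $x\nmid f_1 f_2$ is exactly what rules out the bad cancellations at $0$ and $\infty$.
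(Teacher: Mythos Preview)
Your outline is the paper's approach, and the endgame arithmetic is right; two points need tightening before it becomes a proof.

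First, the appearance of $W(Q)$ rather than $W(q^n-1)$ on the $\epsilon$-side is not a post-processing trick on the divisor sum. Lemma~\ref{L5} is applied \emph{before} expanding: since $\mathrm{N}_{q^n/q}(\epsilon)=b$ with $b$ primitive already forces the $\delta$-part of freeness, one counts $Q$-free (not $(q^n-1)$-free) $\epsilon$, so your displayed $N$ should carry $\rho_Q(\epsilon)$, and the sum over $d_1$ then ranges over divisors of $Q$, giving $W(Q)$ directly. Writing $\rho_{q^n-1}(\epsilon)$ and then hoping to ``replace $W(q^n-1)$ by $W(Q)$'' afterwards does not work.

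Second, the step you flag as the ``main obstacle'' is the actual content of the proof, not a routine verification. With $F(x)=x^{\,c_1+i(q^n-1)/(q-1)}f(x)^{c_2}$ and $G(x)=y_1x+tx^{-1}+y_2f(x)$, one must show that $G=\mathcal{L}^{q^n}-\mathcal{L}$ for some $\mathcal{L}\in\Field_{q^n}(x)$ forces $t=y_1=y_2=0$, and that $F=\mathcal{R}^{q^n-1}$ forces $c_1=c_2=0$. The paper does this by a case analysis that uses every hypothesis on $f\in\mathcal{Q}_{q,n}(m)$: irreducibility of $f_1,f_2$, their coprimality, $x\nmid f_1f_2$, and $\deg f_i<q^n$. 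For instance, clearing denominators in $G=\mathcal{L}^{q^n}-\mathcal{L}$ and comparing the factor $x$ forces $t=0$ precisely because $x\nmid f_2$; and ruling out $F=\mathcal{R}^{q^n-1}$ when $c_2\neq 0$ requires tracking the exponent $q^n-1-c_1-i(q^n-1)/(q-1)$ through three sub-cases. Your sketch asserts these non-degeneracy conditions hold but does not supply the argument, and without them Lemmas~\ref{L1}--\ref{L2} cannot be invoked. (Minor arithmetic: in the worst case $D_1+D_2+D_3+D_4\le 2m+3$, so the Weil bound is $(2m+2)q^{n/2}$, not $(2m+1)q^{n/2}$; this is exactly where the constant $2m+2$ in the statement comes from.)
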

	\bp
	Suppose that $\mathcal{U}$ be the set containing zeros and poles of $f$ and $\mathcal{U}_{1}=\mathcal{U}\cup \{0\}$. Then by the definition $\mathfrak{M}_{f,a,b}(Q_{e_{1}},e_{2},g_{1},g_{2})$ is given by
	\begin{equation}\nonumber
		\begin{aligned}
			\sum_{\epsilon\in\Field_{q^{n}}\smallsetminus \mathcal{U}_{1}}^{}\rho_{Q_{e_{1}}}(\epsilon)\rho_{e_{2}}(f(\epsilon))\kappa_{g_{1}}(\epsilon)\kappa_{g_{2}}(f(\epsilon))\tau_{ab^{-1}}(\epsilon^{-1})\eta_{b}(\epsilon).
		\end{aligned}
	\end{equation}
	Using (\ref{eq1}), (\ref{eq2}), (\ref{eq3}) and (\ref{eq4}) we have
	\begin{equation}\lb{E5}
		\begin{aligned}
			\mathfrak{M}_{f,a,b}(Q_{e_{1}},e_{2},g_{1},g_{2})=\mathcal{H}\underset{\underset{h_{1}|g_{1},h_{2}|g_{2}}{d_{1}|Q_{e_{1}}, d_{2}|e_{2}}}{\sum}\frac{\mu}{\phi}(d_{1},d_{2},h_{1},h_{2})\underset{\underset{\la_{h_{1},\la_{h_{2}}}}{\chi_{d_{1}},\chi_{d_{2}}}}{\sum}\boldsymbol{\chi}_{f,a,b}(d_{1},d_{2},h_{1},h_{2}) 
		\end{aligned}
	\end{equation}
	where $\mathcal{H}=\frac{\theta(Q_{e_{1}})\theta(e_{2})\Theta(g_{1})\Theta(g_{2})}{q(q-1)}$, $\frac{\mu}{\phi}(d_{1},d_{2},h_{1},h_{2})=\frac{\mu(d_{1})\mu(d_{2})\mu_{q}(h_{1})\mu_{q}(h_{2})}{\phi(d_{1})\phi(d_{2})\Phi_{q}(h_{1})\Phi_{q}(h_{2})}$ and
	\begin{equation}\nonumber
		\begin{aligned}
			\boldsymbol{\chi}_{f,a,b}(d_{1},d_{2},h_{1},h_{2}) &=\sum_{i=1}^{q-1}\sum_{t\in\Field_{q}}^{}\chi_{q-1}(b^{-i})\la_{0}(-ab^{-1}t)\sum_{\epsilon\in\Field_{q^{n}}\smallsetminus \mathcal{U}_{1}}^{}\chi_{d_{1}}(\epsilon)\chi_{d_{2}}(f(\epsilon))\la_{h_{1}}(\epsilon)\\&\times\la_{h_{2}}(f(\epsilon))\tilde{\chi}^{i}(\epsilon)\widehat{\la_{0}}(t\epsilon^{-1}).
		\end{aligned}
	\end{equation}
	Since $\chi_{q^n-1}$ is a multiplicative character of order $q^n-1$ in the cyclic group $\widehat{\Field_{q^{n}}^*}$, there exist $c_{i}\in\{0,1,2,\ldots,q^n-2\}$ such that $\chi_{d_{i}}(\epsilon)=\chi_{q^n-1}(\epsilon^{c_{i}})$ for $i=1,2$. 
	Furthermore, there exist $y_{1},y_{2}\in\Field_{q^{n}}$ such that $\la_{h_{i}}(\epsilon)=\widehat{\la_{0}}(y_{i}\epsilon)$, for $i=1,2$. Thus
	\begin{equation}\nonumber
		\begin{aligned}
			\boldsymbol{\chi}_{f,a,b}(d_{1},d_{2},h_{1},h_{2}) =&\sum_{i=1}^{q-1}\sum_{t\in\Field_{q}}^{}\chi_{q-1}(b^{-i})\la_{0}(-ab^{-1}t)\sum_{\epsilon\in\Field_{q^{n}}\smallsetminus \mathcal{U}_{1}}^{}\chi_{q^n-1}(\epsilon^{c_{1}+\frac{q^n-1}{q-1}i}f(\epsilon)^{c_{2}})\\&\times\widehat{\la_{0}}(y_{1}\epsilon+t\epsilon^{-1}+y_{2}f(\epsilon))\\
			=&\sum_{i=1}^{q-1}\sum_{t\in\Field_{q}}^{}\chi_{q-1}(b^{-i})\la_{0}(-ab^{-1}t)\sum_{\epsilon\in\Field_{q^{n}}\smallsetminus \mathcal{U}_{1}}^{}\chi_{q^n-1}(F(\epsilon))\widehat{\la_{0}}(G(\epsilon)),
		\end{aligned}
	\end{equation}
	where $F(x)=x^{c_{1}+\frac{q^n-1}{q-1}i}{f(x)}^{c_{2}}\in\Field_{q^{n}}(x)$ and $G(x)=y_{1}x+tx^{-1}+y_{2}f(x)\in\Field_{q^{n}}(x)$.   Firstly, let us consider the situation $G(x)\neq {\mathcal{L}(x)}^{q^n}-{\mathcal{L}(x)}$ for any $\mathcal{L}(x)\in\Field_{q^{n}}(x)$. Here we arrive at the following possibilities.\\
{\textbf{Case 1:}}
	If $m_{2}+1\geq m_{1}$, then as mentioned in Lemma \ref{L2} we have $D_{2}=1$, and
	\begin{equation}\nonumber
		\begin{aligned}
			|\boldsymbol{\chi}_{f,a}(d_{1},d_{2},h_{1},h_{2})|\leq (2m+2)(q-1)q^{\frac{n}{2}+1}.\end{aligned}
	\end{equation}
{\textbf{Case 2:}} If $m_{2}+1<m_{1}$, then we have $D_{2}=m_{1}-m_{2}$ and
	\begin{equation}\nonumber
		\begin{aligned}
			|\boldsymbol{\chi}_{f,a}(d_{1},d_{2},h_{1},h_{2}) |\leq (2m+1)(q-1)q^{\frac{n}{2}+1}.
		\end{aligned}
	\end{equation} Next, we assume that $G(x)={\mathcal{L}(x)}^{q^n}-{\mathcal{L}(x)}$ for some $\mathcal{L}(x)\in\Field_{q^{n}}(x)$.
	Then we have
	\begin{equation}\lb{E6}
		\begin{aligned}
			y_{1}x+tx^{-1}+y_{2}f(x)={\mathcal{L}(x)}^{q^n}-{\mathcal{L}(x)}.
		\end{aligned}
	\end{equation}
	We claim that the above equation is feasible only if $y_{1}=y_{2}=t=0$. Let us write $\mathcal{L}(x)=\frac{l_{1}(x)}{l_{2}(x)}$ with gcd$(l_{1},l_{2})=1$, which gives that
	\begin{equation}\lb{E7}
		\begin{aligned}
			xf_{2}(x)({l_{1}(x)}^{q^n}-l_{1}(x){l_{2}(x)}^{q^n-1})={l_{2}(x)}^{q^n}(y_{1}x^2f_{2}(x)+tf_{2}(x)+y_{2}xf_{1}(x)).
		\end{aligned}
	\end{equation}
	Since $({l_{1}(x)}^{q^n}-l_{1}(x){l_{2}(x)}^{q^n-1}, {l_{2}(x)}^{q^n})=1$, it implies that ${l_{2}(x)}^{q^n}|xf_{2}(x)$. Further, since $f\in\mathcal{Q}_{q,n}(m)$, we have $f_{2}(x)|{l_{2}(x)}^{q^n}\implies {l_{2}(x)}^{q^n}=k{f_{2}}(x)$ for some $k\in\Field_{q^{n}}[x]$, which further implies ${l_{2}(x)}^{q^n}=f_{2}(x)$ or ${l_{2}(x)}^{q^n}=xf_{2}(x)$. The earlier is possible only if $l_{2}(x)=w$, where $w\in\Field_{q^{n}}^*$. Then (\ref{E7}) becomes
	\begin{equation}\nonumber
		x({l_{1}(x)}^{q^n}-l_{1}(x))=(y_{1}x^2w+tw+y_{2}xf_{1}(x)),
	\end{equation}
	and this forces that $t=0$. Substituting it to the above yields, ${l_{1}(x)}^{q^n}-l_{1}(x)=y_{1}xw+y_{2}f_{1}(x)$, which happens only if $l_{1}$ is nonzero constant and $y_{1}=y_{2}=0$. Now, let us consider the latter possibility, that is, ${l_{2}(x)}^{q^n}=xf_{2}(x)$. This gives $x|l_{2}(x)$, which further gives $x|f_{2}(x)$, a contradiction. Hence we have $t=y_{1}=y_{2}=0$, that is, $h_{1}=h_{2}=1, t=0$. In addition to this, let us consider the following possibilities.\\
{\textbf{Case 1:}} If $F(x)\neq {\mathcal{R}(x)}^{q^n-1}$ for any $\mathcal{R}(x)\in\Field_{q^{n}}(x)$, then it follows from Lemma \ref{L1}, that
	\begin{equation}\nonumber
		\begin{aligned}
			|\boldsymbol{\chi}_{f,a}(d_{1},d_{2},h_{1},h_{2}) |\leq m(q-1)q^{\frac{n}{2}+1}.
		\end{aligned}
	\end{equation}
{\textbf{Case 2:}} Here, we consider the case when $F(x)=\mathcal{R}(x)^{q^n-1}$ for some $\mathcal{R}(x)\in\Field_{q^{n}}(x)$, where $\mathcal{R}(x)=\frac{r_{1}(x)}{r_{2}(x)}$ with gcd$(r_{1},r_{2})=1$. Then we have $x^{c_{1}+\frac{q^n-1}{q-1}i}\bigg({\frac{f_{1}(x)}{f_{2}(x)}}\bigg)^{c_{2}}=\bigg({\frac{r_{1}(x)}{r_{2}(x)}}\bigg)^{q^{n}-1}$, that is,
	\begin{equation}\lb{E8}
		\begin{aligned}
			x^{c_{1}+\frac{q^n-1}{q-1}i}{f_{1}(x)}^{c_{2}}{r_{2}(x)}^{q^{n}-1}={f_{2}(x)}^{c_{2}}{r_{1}(x)}^{q^{n}-1}.
		\end{aligned}
	\end{equation}
	We now show that equation (\ref{E8}) is feasible only if $c_{1}=c_{2}=0$. For this, first we show that if $c_{2}$ is $0$, then $c_{1}$ must be $0$. Suppose that $c_{2}=0$. From equation (\ref{E8}), it follows that $c_{1}+\frac{q^n-1}{q-1}i=k_{1}(q^n-1)$ for some positive integer $k_{1}$.  Following \cite{AMS}, it happens only if $c_{1}=0$. Next if possible, let $c_{2}\neq 0$. Again, $c_{1}+\frac{q^n-1}{q-1}i>0$ forces that either $x|f_{2}(x)$ or $x|r_{1}(x)$. We may assume that $x|r_{1}(x)$, as $x\nmid f_{2}(x)$. Rewrite equation (\ref{E8}) as
	\begin{equation}\nonumber
		\begin{aligned}
			{f_{1}(x)}^{c_{2}}{r_{2}(x)}^{q^{n}-1}={r_{1}'(x)}^{q^{n}-1}{f_{2}(x)}^{c_{2}}x^{q^n-1-\frac{q^n-1}{q-1}i-c_{1}},
		\end{aligned}
	\end{equation}
	where $r_{1}'(x)=\frac{r_{1}(x)}{x}$. Let us discuss the following possible cases.\\
	\textbf{Case 2.1.}  $q^n-1-\frac{q^n-1}{q-1}i-c_{1}>0$. Since gcd$(r_{1}(x),r_{2}(x))=1$, we must have $x|f_{1}(x)$, a contradiction.\\
	\textbf{Case 2.2.}  $q^n-1-\frac{q^n-1}{q-1}i-c_{1}=0$. In this case, we have $	{f_{1}(x)}^{c_{2}}{r_{2}(x)}^{q^{n}-1}={r_{1}'(x)}^{q^{n}-1}{f_{2}(x)}^{c_{2}}$. Since gcd$(f_{1}(x),f_{2}(x))=1$, the latter gives $f_{2}(x)|r_{2}(x)$, which further implies that ${f_{1}(x)}^{c_{2}}{r_{2}'(x)}^{q^{n}-1}{f_{2}(x)}^{q^n-1-c_{2}}={r_{1}'(x)}^{q^{n}-1}$, where $r_{2}'(x)=\frac{r_{2}(x)}{f_{2}(x)}$. Since $q^n-1-c_{2}>0$, we must have $f_{2}(x)|r_{1}'(x)$, a contradiction.\\
	\textbf{Case 2.3.}  $q^n-1-\frac{q^n-1}{q-1}i-c_{1}<0$. As $x\nmid f_{2}(x)$, so we have $x|r_{1}'(x )$, which gives $	{f_{1}(x)}^{c_{2}}{r_{2}(x)}^{q^{n}-1}={r_{1}''(x)}^{q^{n}-1}{f_{2}(x)}^{c_{2}}x^{2(q^n-1)-\frac{q^n-1}{q-1}i-c_{1}}$, where $r_{1}''(x)=\frac{r_{1}'(x)}{x}$. Here $2(q^n-1)-\frac{q^n-1}{q-1}i-c_{1}>0$ implies that $x|f_{1}(x)$, a contradiction. Thus, it follows that  $F(x)\neq {\mathcal{R}(x)}^{q^n-1}$ for any $\mathcal{R}(x)\in\Field_{q^{n}}(x)$. Hence, $c_{1}=c_{2}=0$, that is, $d_{1}=d_{2}=1$.
	
	Thus, if $(d_{1},d_{2},h_{1},h_{2})\neq (1,1,1,1)$, then based one the above discussions, we get that
	\begin{equation}\nonumber
		|\boldsymbol{\chi}_{f,a,b}(d_{1},d_{2},h_{1},h_{2}) |\leq (2m+2)(q-1)q^{\frac{n}{2}+1}.\\
	\end{equation}
	Further, we have
	\begin{equation}\nonumber
		\begin{aligned}
			\boldsymbol{\chi}_{f,a,b}(1,1,1,1)&=\sum_{i=1}^{q-1}\sum_{t\in\Field_{q}}^{}\chi_{q-1}(b^{-i})\la_{0}(-ab^{-1}t)\sum_{\epsilon\in\Field_{q^{n}}\smallsetminus \mathcal{U}_{1}}^{}{\chi}_{q^n-1}(\epsilon^{\frac{q^n-1}{q-1}i})\widehat{\la_{0}}(t\epsilon^{-1})\\
			&=(q^n-|\mathcal{U}_{1}|)+V_{1}+V_{2},
		\end{aligned}
	\end{equation} where
	\begin{equation}\nonumber
		\begin{aligned}
			V_{1}=\sum_{i=1}^{q-2}\chi_{q-1}(b^{-i})\sum_{\epsilon\in\Field_{q^{n}}\smallsetminus \mathcal{U}_{1}}^{}{\chi}_{q^n-1}(\epsilon^{\frac{q^n-1}{q-1}i})
		\end{aligned}
	\end{equation}
	$$\text{and}$$
	\begin{equation}\nonumber
		\begin{aligned}
			V_{2}=\sum_{i=1}^{q-1}\sum_{t\in\Field_{q}^*}^{}\chi_{q-1}(b^{-i})\la_{0}(-ab^{-1}t)\sum_{\epsilon\in\Field_{q^{n}}\smallsetminus \mathcal{U}_{1}}^{}{\chi}_{q^n-1}(\epsilon^{\frac{q^n-1}{q-1}i})\widehat{\la_{0}}(t\epsilon^{-1}).
		\end{aligned}
	\end{equation} 
	Now, let us find upper bounds of $|V_{1}|$ and $|V_{2}|$. Note that for $i\in\{1,2,\ldots, q-2\}$, ${\chi}_{q^n-1}^{\frac{q^n-1}{q-1}i}$ is a nontrivial character and thus $\sum_{\epsilon\in\Field_{q^{n}}^*}^{}{\chi}_{q^n-1}^{\frac{q^n-1}{q-1}i}(\epsilon)=0$. Hence, we get $|V_{1}|\leq m(q-2)$. Moreover, for any $t\in\Field_{q}^*$, $tx^{-1}$ is not of the form $H(x)^{q^n}-H(x)$, for any $H(x)\in\Field_{q^{n}}(x)$. Then, we have $|V_{2}|\leq (q^{n/2}+m)(q-1)^2$.\\
	
		Therefore from (\ref{E5}), we get
	\begin{equation}\lb{E9}
		\begin{aligned}
			\mathfrak{M}_{f,a,b}(Q_{e_{1}},e_{2},g_{1},g_{2})&\geq \mathcal{H}\{q^n-(m+1)-m(q-2)-(q-1)^2(q^{n/2}+m)\\&- (2m+2)q^{\frac{n}{2}+2}(W(Q_{e_{1}})W(e_{2})W(g_{1})W(g_{2})-1)\}\\
			&\geq \mathcal{H}\{(q^n- (2m+2)q^{\frac{n}{2}+2}W(Q_{e_{1}})W(e_{2})W(g_{1})W(g_{2})\}.
		\end{aligned}
	\end{equation}
	Hence $\mathfrak{M}_{f,a.b}(Q_{e_{1}},e_{2},g_{1},g_{2})>0$, if we have $q^{\frac{n}{2}-2}> (2m+2)W(Q_{e_{1}})W(e_{2})W(g_{1})W(g_{2})$. Consequently, we have $(q,n)\in\mathcal{S}_{m}$ by choosing $e_{1}=e_{2}=q^n-1$ and $g_{1}=g_{2}=x^n-1$, that is provided
	$$q^{\frac{n}{2}-2}>
	(2m+2)W(Q){W(q^n-1)}{W(x^n-1)}^2.$$
	\ep
	\section{Prime Sieve}\lb{S4}
	
	In this section, we provide the prime sieve inequality initially instigated by Kapetanakis in \cite{GK}, and subsequently employ it following certain modifications.
	\begin{lem}\lb{L4.1}
		Let $e'|Q$ and $p_{1}',p_{2}',\ldots,p_{u}'$ be the remaining distinct prime divisors of $Q$, let $e| q^n-1$ and $p_{1},p_{2},\ldots,p_{r}$ be the remaining distinct primes dividing $q^n-1$. Moreover, let $g$ be a divisor of $x^n-1$ and $g_{1},g_{2},\ldots,g_{s}$ be the remaining distinct irreducible factors of $x^n-1$. Abbreviate $\mathfrak{M}_{f,a,b}(Q,q^n-1,x^n-1,x^n-1)$ to $\mathfrak{M}_{f,a,b}$. Then 
		\begin{equation}
			\begin{aligned}
				\mathfrak{M}_{f,a,b}\geq \sum_{i=1}^{u}\mathfrak{M}_{f,a.b}(p_{i}&'e',e,g,g)+\sum_{i=1}^{r}	\mathfrak{M}_{f,a,b}(e',p_{i}e,g,g)+\sum_{j=1}^{s}\mathfrak{M}_{f,a,b}(e',e,gg_{j},g)\\
				&+\sum_{j=1}^{s}\mathfrak{M}_{f,a,b}(e',e,g,gg_{j})-(u+r+2s-1)\mathfrak{M}_{f,a,b}(e',e,g,g).\\
			\end{aligned}
		\end{equation}
	\end{lem}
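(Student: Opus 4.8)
The plan is to prove Lemma \ref{L4.1} by a standard inclusion–exclusion (``sieving'') argument on the characteristic functions $\rho_e$ and $\kappa_g$, exploiting the fact that $e$-freeness and $g$-freeness factor through the prime divisors of $e$ and the irreducible factors of $g$ respectively. The starting point is the elementary observation that for any prime $p$ dividing $q^n-1$ but not dividing $e$, one has the decomposition $\rho_{pe}=\rho_e+(\text{something supported on }p\text{-th powers})$; more precisely, writing $w_p$ for the weight $\tfrac{1}{p-1}\bigl(\sum_{\ord(\chi)=p}\chi\bigr)$ we get the pointwise identity $\rho_{pe}(\epsilon)=\bigl(\tfrac{1}{\theta(p)}-1\bigr)^{-1}\cdot(\cdots)$ — but the cleanest route avoids manipulating $\theta$ factors and instead uses the well-known combinatorial identity that, for pairwise coprime $e', p_1,\dots,p_r$ and $g, g_1,\dots,g_s$,
\begin{equation}\nonumber
\rho_{e' p_1\cdots p_r}\,\kappa_{g\,g_1\cdots g_s}\;\ge\;\sum_{i=1}^{r}\rho_{e'p_i}\kappa_{g}+\sum_{j=1}^{s}\rho_{e'}\kappa_{gg_j}-(r+s-1)\rho_{e'}\kappa_{g}
\end{equation}
holds as an inequality of $\{0,1\}$-valued (indeed integer-valued) functions, because the left side is the indicator of the intersection of the events $A_i=\{p_i\text{-free}\}$ and $B_j=\{g_j\text{-free part present}\}$ over the base locus $\{e'\text{-free},g\text{-free}\}$, and Bonferroni/Boole gives $\mathbf{1}_{\cap A_i\cap B_j}\ge \sum \mathbf{1}_{A_i}+\sum\mathbf{1}_{B_j}-(r+s-1)$.

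First I would fix notation exactly as in the statement: $e'\mid Q$ with remaining primes $p_1',\dots,p_u'$; $e\mid q^n-1$ with remaining primes $p_1,\dots,p_r$; $g\mid x^n-1$ with remaining irreducible factors $g_1,\dots,g_s$. Note there are four ``coordinates'' being sieved — the $Q_{e_1}$-freeness of $\epsilon$, the $e_2$-freeness of $f(\epsilon)$, the $g_1$-freeness of $\epsilon$, and the $g_2$-freeness of $f(\epsilon)$ — and in the claimed inequality the first coordinate is refined by the $p_i'$, the second by the $p_i$, the third and fourth each by the $g_j$. So the relevant pointwise inequality, with $\epsilon$ ranging over $\Field_{q^n}\smallsetminus\mathcal{U}_1$, is
\begin{equation}\nonumber
\rho_{Q}(\epsilon)\rho_{q^n-1}(f(\epsilon))\kappa_{x^n-1}(\epsilon)\kappa_{x^n-1}(f(\epsilon))\;\ge\;\Sigma_1+\Sigma_2+\Sigma_3+\Sigma_4-(u+r+2s-1)\,\rho_{e'}(\epsilon)\rho_{e}(f(\epsilon))\kappa_{g}(\epsilon)\kappa_{g}(f(\epsilon)),
\end{equation}
where $\Sigma_1=\sum_{i}\rho_{p_i'e'}(\epsilon)\rho_e(f(\epsilon))\kappa_g(\epsilon)\kappa_g(f(\epsilon))$ and $\Sigma_2,\Sigma_3,\Sigma_4$ are the analogous sums over $p_i$, over $g_j$ in the $\epsilon$-slot, and over $g_j$ in the $f(\epsilon)$-slot. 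The key sub-lemma is that the left-hand indicator equals $\mathbf{1}[\epsilon\text{ is }Q\text{-free}]\cdot\mathbf{1}[\epsilon\text{ is }(x^n-1)\text{-free}]\cdots$, which by the factorization of freeness over coprime radicals equals the product over all $p_i',p_i,g_j$ of the corresponding ``one-prime-more-free'' indicators relative to the base event; then one invokes the Boole–Bonferroni inequality $\mathbf{1}[\cap_{k} E_k]\ge \sum_k \mathbf{1}[E_k]-(N-1)$ for $N$ events $E_k$ each containing a common base event, with $N=u+r+2s$.

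After establishing the pointwise inequality, I would sum over $\epsilon\in\Field_{q^n}\smallsetminus\mathcal{U}_1$ — equivalently over the $\epsilon$ counted with the trace and norm constraints, since the factors $\tau_{ab^{-1}}(\epsilon^{-1})\eta_b(\epsilon)$ are nonnegative and can simply be carried through the inequality — and recognize each summed term as the corresponding $\mathfrak{M}_{f,a,b}(\cdots)$: the left side becomes $\mathfrak{M}_{f,a,b}(Q,q^n-1,x^n-1,x^n-1)=\mathfrak{M}_{f,a,b}$, $\Sigma_1$ becomes $\sum_{i=1}^u\mathfrak{M}_{f,a,b}(p_i'e',e,g,g)$, and so on, while the subtracted term becomes $(u+r+2s-1)\mathfrak{M}_{f,a,b}(e',e,g,g)$, which is exactly the claimed inequality. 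The one point requiring a little care is that $\mathfrak{M}_{f,a,b}(Q_{e_1},e_2,g_1,g_2)$ as defined in Section \ref{S3} is a weighted count (the weights being the $\tau$ and $\eta$ characteristic functions, which are $\{0,1\}$-valued, so they really are just counts over a restricted set) — so the monotonicity ``$F\ge G$ pointwise $\Rightarrow \sum F\ge \sum G$'' applies verbatim.

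The main obstacle is the pointwise sieve inequality itself, specifically verifying that ``$e$-freeness of $\epsilon$'' genuinely decomposes as the conjunction, over the prime divisors of $e$ beyond those of $e'$, of the corresponding $p e'$-freeness conditions relative to $e'$-freeness — and likewise for $g$-freeness using the coprime factorization of $x^n-1$ in $\Field_q[x]$ and the $\Field_q[x]$-module structure on $\Field_{q^n}$. This is where one must use that if $e=e'p_1\cdots p_r$ with the $p_i$ distinct primes not dividing $e'$, then $\epsilon$ is $e$-free iff $\epsilon$ is $e'p_i$-free for every $i$ (because a nontrivial $d\mid e$ witnessing non-$e$-freeness must be divisible by some $p_i$ or divide $e'$), and symmetrically for $g$; granting that, the Bonferroni bound is routine. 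I would present the $e$-case in detail and remark that the $g$-case is identical after replacing $\mathbb{Z}$-divisibility by $\Field_q[x]$-divisibility and prime powers by $h\circ(-)$ operators, citing the analogous treatment in \cite{GK} for the template.
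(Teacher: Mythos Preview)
Your argument is correct and is precisely the standard Cohen--Huczynska/Kapetanakis sieve: reduce to the pointwise Bonferroni bound $\mathbf{1}_{\cap_k E_k}\ge \sum_k \mathbf{1}_{E_k}-(N-1)$, apply it on the base event ``$\epsilon$ is $e'$-free, $f(\epsilon)$ is $e$-free, both $g$-free, trace and norm prescribed'' with the $N=u+r+2s$ extra one-prime (or one-irreducible) freeness conditions, multiply through by the nonnegative indicator of the base event, and sum over $\epsilon$. Your verification that $e$-freeness factors over the prime radicals (and the $\Field_q[x]$-analogue for $g$-freeness) is exactly what is needed, and your observation that $\tau_{ab^{-1}}$ and $\eta_b$ are $\{0,1\}$-valued so can be absorbed into the base event is the right way to handle the trace/norm constraints.

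The paper itself does not prove Lemma~\ref{L4.1}: it simply states the inequality and attributes the sieve to Kapetanakis~\cite{GK}. So there is no ``paper's own proof'' to compare against; your write-up supplies the omitted argument, and it matches the template in \cite{GK} that the paper is implicitly invoking. One cosmetic suggestion: you can drop the initial digression about $w_p$ and $\theta(p)$ factors entirely, since as you note the cleanest route (and the one you actually carry out) never touches them.
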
 
	Upper bounds of certain differences are given in the following result, which will be needed further.
	\begin{lem}\lb{L4.2}
		Let $e',e,n,q\in\mathbb{N}$, $g\in\Field_{q}[x]$ be such that $q$ is a prime power, $n\geq 5$, $e'|Q$, $e|q^n-1$,  and $g|x^n-1$. Let $P'$ be a prime number which divides $Q$ but not $e'$, let $P$ be a prime number which divides $q^n-1$ but not $e$, and also $I$ be an irreducible polynomial which divides $x^n-1$ but not $g$. Then we get the following bounds:
		\begin{equation}\nonumber
			\begin{aligned}
				|\mathfrak{M}_{f,a,b}(P'e',e,g,g)-\theta(P')&\mathfrak{M}_{f,a,b}(e',e,g,g)|\\&\leq (2m+2)\theta(P'){\theta(e)}\theta(e'){\Theta(g)}^2{W(e)}W(e'){W(g)}^2q^{n/2},\\
			\end{aligned}
		\end{equation} 
		\begin{equation}\nonumber
			\begin{aligned}
				|\mathfrak{M}_{f,a,b}(e',Pe,g,g)-\theta(P)&\mathfrak{M}_{f,a,b}(e',e,g,g)|\\&\leq (2m+2)\theta(P)\theta(e)\theta(e'){\Theta(g)}^2W(e)W(e'){W(g)}^2q^{n/2},\\
			\end{aligned}
		\end{equation}
		\begin{equation}\nonumber
			\begin{aligned}
				|\mathfrak{M}_{f,a,b}(e',e,Ig,g)-\Theta(I)&\mathfrak{M}_{f,a,b}(e',e,g,g)|\\&\leq (2m+2)\Theta(I)\theta(e)\theta(e'){\Theta(g)}^2W(e)W(e'){W(g)}^2q^{n/2},\\
			\end{aligned}
		\end{equation}
		\begin{equation}\nonumber
		\begin{aligned}
			|\mathfrak{M}_{f,a,b}(e',e,g,Ig)-\Theta(I)&\mathfrak{M}_{f,a,b}(e',e,g,g)|\\&\leq (2m+2)\Theta(I)\theta(e)\theta(e'){\Theta(g)}^2W(e)W(e'){W(g)}^2q^{n/2}.\\
		\end{aligned}
		\end{equation}
		\end{lem}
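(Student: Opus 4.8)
The plan is to establish each of the four estimates in exactly the same way, since they are structurally identical, and to model the argument on the estimate for the full character sum $\boldsymbol{\chi}_{f,a,b}$ that appeared in the proof of Theorem~\ref{T3.1}. First I would expand $\mathfrak{M}_{f,a,b}(P'e',e,g,g)$ using the characteristic functions $\rho$, $\kappa$, $\tau$, $\eta$ exactly as in~(\ref{E5}), so that it becomes a weighted sum over divisors $d_{1}\mid Q_{P'e'}$, $d_{2}\mid e$, $h_{1}\mid g$, $h_{2}\mid g$ of the quantities $\boldsymbol{\chi}_{f,a,b}(d_{1},d_{2},h_{1},h_{2})$, with leading constant $\mathcal{H}'=\frac{\theta(P'e')\theta(e)\Theta(g)^{2}}{q(q-1)}$. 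The key observation is that $\theta$ is multiplicative on coprime arguments and $\gcd(P',e')=1$, so $\theta(P'e')=\theta(P')\theta(e')$; moreover the set of divisors of $Q_{P'e'}$ splits as the disjoint union of the divisors of $Q_{e'}$ and the divisors $P'd_{1}$ with $d_{1}\mid Q_{e'}$.

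Using this splitting, I would write $\mathfrak{M}_{f,a,b}(P'e',e,g,g)=\theta(P')\,\mathfrak{M}_{f,a,b}(e',e,g,g)+\theta(P')\mathcal{H}_{e'}\,R$, where $\mathcal{H}_{e'}=\frac{\theta(e')\theta(e)\Theta(g)^{2}}{q(q-1)}$ and $R$ is the part of the sum where $d_{1}$ is replaced by $P'd_{1}$, i.e. the terms carrying the extra Möbius/Euler factor $\frac{\mu(P')}{\phi(P')}$ and an extra character of order divisible by $P'$. The difference is therefore governed entirely by $R$, a sum of terms $\frac{\mu}{\phi}(P'd_{1},d_{2},h_{1},h_{2})$ times $\boldsymbol{\chi}_{f,a,b}(P'd_{1},d_{2},h_{1},h_{2})$, over $d_{1}\mid Q_{e'}$, $d_{2}\mid e$, $h_{1},h_{2}\mid g$. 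Here every relevant character $\chi_{P'd_{1}}\chi_{d_{2}}$ is nontrivial because $P'$ genuinely divides its order, so the same case analysis as in Theorem~\ref{T3.1} (splitting on whether $G(x)$ has the form $\mathcal{L}^{q^{n}}-\mathcal{L}$ and whether $F(x)$ is a perfect $(q^{n}-1)$-th power) applies verbatim and yields $|\boldsymbol{\chi}_{f,a,b}(P'd_{1},d_{2},h_{1},h_{2})|\le (2m+2)(q-1)q^{n/2+1}$ in all cases. Summing $\frac{1}{\phi(P'd_{1})\phi(d_{2})\Phi_{q}(h_{1})\Phi_{q}(h_{2})}$ against the number of characters of each order — which produces the familiar $W(\cdot)$ factors — gives $|R|\le (2m+2)(q-1)q^{n/2+1}\cdot\frac{W(P'e')}{2}\cdot\frac{W(e)W(g)^{2}}{\phi(P')}\cdot\frac{1}{q-1}$ after the usual manipulations; combining with $\mathcal{H}_{e'}$ and simplifying $W(P'e')=2W(e')$, $\frac{1}{\phi(P')}\theta(e')=\theta(e')\theta(P')\cdot\frac{1}{\phi(P')\theta(P')}$ collapses to the stated bound $(2m+2)\theta(P')\theta(e)\theta(e')\Theta(g)^{2}W(e)W(e')W(g)^{2}q^{n/2}$.

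The remaining three inequalities are proved by the identical computation, simply permuting the role of the modulus that gains a new prime: for $P\mid q^{n}-1$ one uses multiplicativity of $\theta$ on the second coordinate, and for an irreducible $I\mid x^{n}-1$ one uses multiplicativity of $\Theta$ together with the analogous splitting of the divisors of $g$, noting that $\Theta(Ig)=\Theta(I)\Theta(g)$ and $W(Ig)=2W(g)$, and that the corresponding additive characters $\lambda_{Ih_{1}}$ (resp.\ $\lambda_{Ih_{2}}$) have $\Field_{q}$-order properly divisible by $I$, hence $y_{1}$ (resp.\ $y_{2}$) is nonzero and the triviality case is excluded. I expect the only real subtlety — the "hard part" — to be the bookkeeping that guarantees that adjoining the new prime (or irreducible) always makes the pertinent character nontrivial, so that none of the character sums degenerates to a main term; once that is checked, each bound follows from Lemmas~\ref{L1} and~\ref{L2} together with the elementary estimates $\sum_{d\mid l}\frac{1}{\phi(d)}\#\{\chi_{d}\}=\sum_{d\mid l}\frac{\mu^{2}(d)}{\phi(d)}\phi(d)\le W(l)$ exactly as in the proof of the main theorem.
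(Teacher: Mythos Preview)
Your approach is essentially the same as the paper's: you split the divisor sum for $\mathfrak{M}_{f,a,b}(P'e',e,g,g)$ into the part with $P'\nmid d_{1}$ (yielding $\theta(P')\mathfrak{M}_{f,a,b}(e',e,g,g)$) and the part with $P'\mid d_{1}$, then bound the latter term-by-term using the estimate $|\boldsymbol{\chi}_{f,a,b}(d_{1},d_{2},h_{1},h_{2})|\le (2m+2)(q-1)q^{n/2+1}$ from Theorem~\ref{T3.1}, and the paper does exactly this (phrasing the count of remaining terms as $W(P'e')-W(e')=W(e')$). Your intermediate bookkeeping for $|R|$ is a bit garbled (the stray $1/\phi(P')$ and $1/(q-1)$ factors do not belong, since the $\phi$'s cancel against the number of characters and the $q-1$ against the one in $\mathcal{H}$), but the structure and the final bound are correct.
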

	\bp
	From the definition, we have
	\begin{equation}\nonumber
		\begin{aligned}
		\mathfrak{M}_{f,a,b}(P'e',e,g,g)-&\theta(P')\mathfrak{M}_{f,a}(e',e,g,g)\\&=\mathcal{H}\underset{\underset{h_{1}|g,h_{2}|g}{P'|d_{1}|P'e', d_{2}|e}}{\sum}\frac{\mu}{\phi}(d_{1},d_{2},h_{1},h_{2})\underset{\underset{\la_{h_{1},\la_{h_{2}}}}{\chi_{d_{1}},\chi_{d_{2}}}}{\sum}\boldsymbol{\chi}_{f,a}(d_{1},d_{2},h_{1},h_{2}) . 
		\end{aligned}
	\end{equation}
	By using $|\boldsymbol{\chi}_{f,a}(d_{1},d_{2},h_{1},h_{2})|\leq (2m+2)(q-1)q^{\frac{n}{2}+1}$, we get
	\begin{equation}\nonumber
		\begin{aligned}
			|\mathfrak{M}_{f,a}(&P'e',e,g,g)-\theta(P')\mathfrak{M}_{f,a}(e',e,g,g)|\\&\leq\frac{\theta(P')\theta(e)\theta(e')\Theta(g)^2}{q(q-1)}(2m+2)(q-1)q^{\frac{n}{2}+1}W(e){W(g)}^2(W(P'e')-W(e')). 
		\end{aligned}
	\end{equation}
	Since $W(P'e')=W(P')W(e')=2W(e')$, we have 
	\begin{equation}\nonumber
	\begin{aligned}
		|\mathfrak{M}_{f,a}(P'e',e,g,g)-\theta(P')&\mathfrak{M}_{f,a}(e',e,g,g)|\\&\leq (2m+2)\theta(P'){\theta(e)}\theta(e'){\Theta(g)}^2{W(e)}W(e'){W(g)}^2q^{n/2},\\
	\end{aligned}
\end{equation} 
	The other bounds can also be derived in a similar manner.
	\ep
	\begin{thm}\lb{T4.3}
		Let $e',e,n,q\in\mathbb{N}$, $g\in\Field_{q}[x]$ be such that $q$ is a prime power, $n\geq 5$, $e'|Q$, $e|q^n-1$ and $g|x^n-1$. Let $p_{1}',p_{2}',\ldots,p_{u}'$ be the distinct primes dividing $Q$ but not $e'$, let $p_{1},p_{2},\ldots,p_{r}$ be the distinct primes dividing $e$ but not $q^n-1$, and $g_{1},g_{2},\ldots,g_{s}$ be the distinct irreducible factor of $g$ but not $x^n-1$. Let us define
		\begin{equation}\nonumber
			\mathcal{S}:=1-\sum_{i=1}^{u}\frac{1}{p_{i}'}-\sum_{i=1}^{r}\frac{1}{p_{i}}-2\sum_{j=1}^{s}\frac{1}{q^{deg(g_{j})}}, \mathcal{S}>0
		\end{equation} 
		and 
		\begin{equation}\nonumber
			\mathcal{M}:=\frac{r+u+2s-1}{\mathcal{S}}+2.
		\end{equation}
		Then $\mathfrak{M}_{f,a,b}>0$, if we have 
		\begin{equation}\lb{E11}
			q^{\frac{n}{2}-2}>
			(2m+2)W(e')W(e){W(g)}^2\mathcal{M}.
		\end{equation}
	\end{thm}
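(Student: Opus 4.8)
The plan is to combine the sieve decomposition of Lemma \ref{L4.1} with the difference estimates of Lemma \ref{L4.2} to obtain a single numerical inequality guaranteeing $\mathfrak{M}_{f,a,b}>0$. First I would start from the inequality of Lemma \ref{L4.1}, namely
\begin{equation}\nonumber
\mathfrak{M}_{f,a,b}\geq \sum_{i=1}^{u}\mathfrak{M}_{f,a,b}(p_{i}'e',e,g,g)+\sum_{i=1}^{r}\mathfrak{M}_{f,a,b}(e',p_{i}e,g,g)+\sum_{j=1}^{s}\mathfrak{M}_{f,a,b}(e',e,gg_{j},g)+\sum_{j=1}^{s}\mathfrak{M}_{f,a,b}(e',e,g,gg_{j})-(u+r+2s-1)\mathfrak{M}_{f,a,b}(e',e,g,g),
\end{equation}
and in each of the $u+r+2s$ leading terms replace the quantity by its ``main part'' plus an error controlled by Lemma \ref{L4.2}: for instance $\mathfrak{M}_{f,a,b}(p_i'e',e,g,g)\geq \theta(p_i')\mathfrak{M}_{f,a,b}(e',e,g,g)-E$ with $E\leq (2m+2)\theta(p_i')\theta(e)\theta(e'){\Theta(g)}^2W(e)W(e'){W(g)}^2q^{n/2}$, and analogously for the other three families (using $\theta(p_i)$ and $\Theta(g_j)$ respectively).

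Next I would collect the main parts. Summing the coefficients, the main contribution becomes
\begin{equation}\nonumber
\Bigl(\sum_{i=1}^{u}\theta(p_i')+\sum_{i=1}^{r}\theta(p_i)+2\sum_{j=1}^{s}\Theta(g_j)-(u+r+2s-1)\Bigr)\mathfrak{M}_{f,a,b}(e',e,g,g),
\end{equation}
and since $\theta(p)=1-\tfrac1p$ for a prime $p$ and $\Theta(g_j)=1-\tfrac{1}{q^{\deg(g_j)}}$ (as $g_j$ is irreducible), the bracket equals exactly
\begin{equation}\nonumber
(u+r+2s)-\sum_{i=1}^{u}\frac{1}{p_i'}-\sum_{i=1}^{r}\frac{1}{p_i}-2\sum_{j=1}^{s}\frac{1}{q^{\deg(g_j)}}-(u+r+2s-1)=1-\sum_{i=1}^{u}\frac{1}{p_i'}-\sum_{i=1}^{r}\frac{1}{p_i}-2\sum_{j=1}^{s}\frac{1}{q^{\deg(g_j)}}=\mathcal{S}.
\end{equation}
So the main term is exactly $\mathcal{S}\cdot\mathfrak{M}_{f,a,b}(e',e,g,g)$, and using the already-established lower bound $\mathfrak{M}_{f,a,b}(e',e,g,g)\geq \mathcal{H}\bigl(q^n-(2m+2)q^{n/2+2}W(e')W(e){W(g)}^2\bigr)$ from (\ref{E9}) (with the appropriate choice $Q_{e_1}=e'$, $e_2=e$, $g_1=g_2=g$), together with $\mathcal{H}=\tfrac{\theta(e')\theta(e){\Theta(g)}^2}{q(q-1)}$, the error terms total $(u+r+2s)(2m+2)\theta(e')\theta(e){\Theta(g)}^2 W(e')W(e){W(g)}^2 q^{n/2}$. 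Factoring out $\mathcal{H}$ from everything and dividing through by $\mathcal{S}\,\theta(e')\theta(e){\Theta(g)}^2 W(e')W(e){W(g)}^2/(q(q-1))$, one checks that $\mathfrak{M}_{f,a,b}>0$ follows once
\begin{equation}\nonumber
q^n > (2m+2)W(e')W(e){W(g)}^2 q^{n/2+2}\Bigl(1+\frac{u+r+2s}{\mathcal{S}}\Bigr),
\end{equation}
i.e. $q^{n/2-2}>(2m+2)W(e')W(e){W(g)}^2\bigl(1+\tfrac{u+r+2s}{\mathcal{S}}\bigr)$. Finally I would note $1+\tfrac{u+r+2s}{\mathcal{S}}=\tfrac{u+r+2s-1}{\mathcal{S}}+1+\tfrac1{\mathcal{S}}\le \tfrac{u+r+2s-1}{\mathcal{S}}+2=\mathcal{M}$ (since $\mathcal{S}\le 1$), so the hypothesis (\ref{E11}) suffices.

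The one point requiring genuine care — and the step I expect to be the main obstacle — is the bookkeeping of which ``reference level'' the sieve is run from: the decomposition in Lemma \ref{L4.1} is stated with $\mathfrak{M}_{f,a,b}$ abbreviating $\mathfrak{M}_{f,a,b}(Q,q^n-1,x^n-1,x^n-1)$, whereas Theorem \ref{T4.3} wants to conclude positivity of that same quantity using an arbitrary $e'\mid Q$, $e\mid q^n-1$, $g\mid x^n-1$ as the base of the sieve. I would reconcile this by applying Lemma \ref{L4.1} with the roles of $e'$ and its complementary primes (and similarly for $e$, $g$) chosen so that $e'$, $e$, $g$ are the divisors retained and $p_1',\dots,p_u'$, $p_1,\dots,p_r$, $g_1,\dots,g_s$ are exactly the complementary prime/irreducible factors; one must also verify the harmless facts that every $\mathfrak{M}_{f,a,b}(\cdot)$ appearing is nonnegative (so discarding the lower-order $-m(q-2)-(q-1)^2(q^{n/2}+m)$ type corrections in (\ref{E9}) only weakens the bound), and that $\mathcal{S}>0$ is used exactly once, when dividing. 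Everything else is the routine arithmetic of substituting the four bounds from Lemma \ref{L4.2} and the telescoping identity $\sum\theta(p_i')+\sum\theta(p_i)+2\sum\Theta(g_j)-(u+r+2s-1)=\mathcal{S}$.
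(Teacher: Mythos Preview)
Your overall strategy is the same as the paper's, and the use of Lemma~\ref{L4.1}, Lemma~\ref{L4.2}, (\ref{E9}), and the telescoping identity $\sum\theta(p_i')+\sum\theta(p_i)+2\sum\Theta(g_j)-(u+r+2s-1)=\mathcal{S}$ is exactly right. However, there is a genuine error in the very last step.

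You bound the total error from Lemma~\ref{L4.2} by $(u+r+2s)\cdot(2m+2)\theta(e')\theta(e)\Theta(g)^2W(e')W(e)W(g)^2q^{n/2}$, i.e.\ you replace each coefficient $\theta(p_i')$, $\theta(p_i)$, $\Theta(g_j)$ by $1$. That is a legitimate upper bound on the error, and it correctly yields the sufficient condition with coefficient $1+\tfrac{u+r+2s}{\mathcal{S}}$. But your subsequent claim
\[
1+\frac{u+r+2s}{\mathcal{S}}=\frac{u+r+2s-1}{\mathcal{S}}+1+\frac{1}{\mathcal{S}}\leq \frac{u+r+2s-1}{\mathcal{S}}+2=\mathcal{M}
\]
is false: since $0<\mathcal{S}\leq 1$, one has $1/\mathcal{S}\geq 1$, so in fact $1+\tfrac{u+r+2s}{\mathcal{S}}\geq \mathcal{M}$, and hypothesis~(\ref{E11}) does \emph{not} imply your derived condition. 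The fix is simply not to overbound: keep the exact error coefficient $\Sigma:=\sum_{i=1}^{u}\theta(p_i')+\sum_{i=1}^{r}\theta(p_i)+2\sum_{j=1}^{s}\Theta(g_j)$ when summing the four bounds of Lemma~\ref{L4.2}. Then the combined coefficient becomes $\tfrac{\Sigma}{\mathcal{S}}+1$, and using the identity $\Sigma=\mathcal{S}+(u+r+2s-1)$ this equals $\tfrac{u+r+2s-1}{\mathcal{S}}+2=\mathcal{M}$ on the nose, which is precisely how the paper obtains~(\ref{E11}).
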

	\bp
	By using Lemma \ref{L4.1}, we get the following expression
	\begin{equation}\nonumber
		\begin{aligned}
			\mathfrak{M}_{f,a,b}&\geq \sum_{i=1}^{u}\{\mathfrak{M}_{f,a,b}(p_{i}'e',e,g,g)-\theta(p_{i}')\mathfrak{M}_{f,a,b}(e',e,g,g)\}\\&+ \sum_{i=1}^{r}\{\mathfrak{M}_{f,a,b}(e',p_{i}e,g,g)-\theta(p_{i})\mathfrak{M}_{f,a,b}(e',e,g,g)\}\\&+ \sum_{j=1}^{s}\{\mathfrak{M}_{f,a,b}(e',e,g_{j}g,g)-\Theta(g_{j})\mathfrak{M}_{f,a,b}(e',e,g,g)\}\\&+ \sum_{j=1}^{s}\{\mathfrak{M}_{f,a,b}(e',e,g,g_{j}g)-\Theta(g_{j})\mathfrak{M}_{f,a,b}(e',e,g,g)\}+\mathcal{S}\mathfrak{M}_{f,a,b}(e',e,g,g).
		\end{aligned}
	\end{equation}
From (\ref{E9}) and using Lemma \ref{L4.2}, we obtain the following expression
	\begin{equation}\nonumber
		\begin{aligned}
			\mathfrak{M}_{f,a,b}&\geq ({\theta(e')\theta(e){\Theta(g)}^2}/q(q-1))\Bigg[\Bigg(\sum_{i=1}^{u}\theta(p_{i}')+\sum_{i=1}^{r}\theta(p_{i})+2\sum_{j=1}^{s}\Theta(g_{j})\Bigg)\{-(2m+2)\\&\times (q-1)W(e)W(e'){W(g)}^2 q^{n/2+1}\}+\mathcal{S}\{q^{n}-(2m+2)q^{n/2+2}W(e)W(e'){W(g)}^2\}\Bigg],
		\end{aligned}
	\end{equation}
	which implies,
	\begin{equation}\nonumber
		\begin{aligned}
			\mathfrak{M}_{f,a,b}\geq  (\mathcal{S}{\theta(e)\theta(e'){\Theta(g)}^2}/q&(q-1))\Bigg[q^{n}-(2m+2)W(e)W(e'){W(g)}^2q^{n/2+2}\\&\times\Bigg\{\Bigg({\sum_{i=1}^{u}\theta(p_{i}')+\sum_{i=1}^{r}\theta(p_{i})+2\sum_{j=1}^{s}\Theta(g_{j})}\Bigg)\Bigg/{\mathcal{S}}+1\Bigg\}\Bigg].
		\end{aligned}
	\end{equation}
	We note that $\mathcal{S}=\sum_{i=1}^{u}\theta(p_{i}')+\sum_{i=1}^{r}\theta(p_{i})+2\sum_{j=1}^{s}\Theta(g_{j})-(u+r+2s-1)$. Then the above turns into 
	\begin{equation}\nonumber
		\begin{aligned}
			\mathfrak{M}_{f,a,b}\geq (\mathcal{S}{\theta(e)\theta(e'){\Theta(g)}^2}/q(q-1))\{q^{n}-(2m+2)q^{n/2+2}W(e)W(e'){W(g)}^2\mathcal{M} \},
		\end{aligned}
	\end{equation}
	where $\mathcal{M}=\frac{u+r+2s-1}{\mathcal{S}}+2$. In fact, if inequality $(\ref{E11})$ holds, $\mathfrak{M}_{f,a,b}>0$ and so $(q,n)\in\mathcal{S}_{m}$.
	\ep
\section{Evaluations}\lb{L5}
In this section we utilize our results to find out the presence of elements having desired properties. The results that are mentioned earlier apply to the arbitrary finite field $\Field_{q^{n}}$ of arbitrary characteristic. For illustration, we explicitly determine each pair $(q,n)$ belonging to $\mathcal{S}_{3}$, where $q=7^k$ and $n\geq 6$. Here let us split our calculations into two parts. Initially, we identify the exceptions $(q,n)$ for $n\geq 8$, and subsequently, we execute the possible exceptions for $n=6,7$. In this article, SageMath \cite{Sm} serves as the computational tool for all significant calculations. From Theorem \ref{T4.3}, it follows that $(q,n)\in\mathcal{S}_{3}$ if we have
	\begin{equation}\lb{E12}
		q^{\frac{n}{2}-2}>8 ~W(e'){W(e)}{W(g)}^2\mathcal{M}.
	\end{equation}
	Also, by Theorem \ref{T3.1}, $(q,n)\in\mathcal{S}_{3}$ if we have
	\begin{equation}\lb{E13}
		q^{\frac{n}{2}-2}>
		8W(Q)W(q^n-1){W(x^n-1)}^2.
	\end{equation}
 Recall that $Q$ is the largest divisor of $q^n-1$ such that gcd($Q,q-1$)$=1$. Clearly, we have $W(Q)\leq W(q^n-1)$.\\
	\textbf{Part I}: Rewrite $n$ as $n = n'\cdot q^i$; $i\geq 0$, where $q,n'\in\mathbb{N}$ be such that $q$ is a prime power, being co prime to $n'$. Furthermore, assume that $d$ be the order of $q$ modulo $n'$, where gcd$(n',q)=1$. Following [{\cite{RH}}, Theorems $2.45$ and $2.47$], $x^{n'}-1$ can be factorized into the product of irreducible polynomials over $\Field_{q}$ in such a way that degree of each factor must be less than or equal to $d$.
	
	Denote $I_{n'}$ as the cardinality of the set containing the irreducible factors of $x^{n'}-1$ over $\Field_{q}$ such that degree of each factor is less than $d$, and let the ratio $\frac{N_{0}}{n'}$ be denoted by $\pi(q,n')$. Observe that the set containing the irreducible factors of $x^{n}-1$ over $\Field_{q}$ and the set containing irreducible factors of $x^{n'}-1$ over $\Field_{q}$ are of equal cardinality, and this results in $n\pi(q,n)=n'\pi(q,n')$. For further computations, we shall use bounds for $\pi(q,n)$, that is provided in the following lemma.
	\begin{lem}\textbf{({\cite{SS}}, Lemma 6.1, Lemma 7.1)}\lb{L6.1}
		Let $q=7^k$ and $n'>4$ be such that $7\nmid n'$. Let $n_{1}'=gcd(n',q-1)$. Then the following hold:
		\begin{itemize}
			\item[(i)] If $n'=2n_{1}'$, then we have $d=2$ and $\pi(q,n')=1/2$.
			\item[(ii)]If $n'=4n_{1}'$ and $q\equiv 1(mod~ 4)$, then we have $d=4$ and  $\pi(q,n')=3/8$.
			\item[(iii)] If $n'=6n_{1}'$ and $q\equiv 1(mod~ 6)$, then $d=6$ and  $\pi(q,n')=13/36$.
			\item[(iv)] Otherwise, $\pi(q,n')\leq 1/3$.
		\end{itemize}
	\end{lem}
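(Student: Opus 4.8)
\emph{Proof idea.} The plan is to read off the full factorization type of $x^{n'}-1$ over $\Field_{q}$ and translate each assertion into an elementary statement about multiplicative orders. Since $\gcd(q,n')=1$, one has $x^{n'}-1=\prod_{e\mid n'}\Phi_{e}(x)$ with $\Phi_{e}$ the $e$-th cyclotomic polynomial, and by [{\cite{RH}}, Theorem $2.47$] each $\Phi_{e}$ factors over $\Field_{q}$ into $\phi(e)/d_{e}$ distinct monic irreducibles of degree $d_{e}:=\mathrm{ord}_{e}(q)$. Hence every irreducible factor of $x^{n'}-1$ has degree at most $d=\mathrm{ord}_{n'}(q)=\mathrm{lcm}_{e\mid n'}\,d_{e}$, the number of factors of degree below $d$ equals $I_{n'}=\sum_{e\mid n',\,d_{e}<d}\phi(e)/d_{e}$, and $\pi(q,n')=I_{n'}/n'$. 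The fact underpinning everything is $d_{e}=1\iff e\mid q-1\iff e\mid n_{1}'$, so the linear factors number exactly $\sum_{e\mid n_{1}'}\phi(e)=n_{1}'$; thus $I_{n'}\ge n_{1}'$ when $d>1$, while $n'=n_{1}'$ forces $d=1$ and $\pi(q,n')=0$.

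First I would organise everything around $k:=n'/n_{1}'$: a prime $p$ divides $k$ exactly when $v_{p}(n')>v_{p}(q-1)$, and for the remaining primes the corresponding prime-power part of $n'$ divides $q-1$ and contributes order $1$ everywhere. For $p^{c}\,\|\,q-1$ the lifting-the-exponent identity $\mathrm{ord}_{p^{c+j}}(q)=p^{j}$ ($j\ge0$, with the familiar modification at $p=2$ when $q\equiv 3\pmod 4$) determines $d$, and since the orders coming from distinct primes are coprime, multiplicativity of $\phi$ makes $I_{n'}$ factor over the primes dividing $k$. Carrying this out turns $\pi(q,n')$ into an explicit finite expression: for squarefree $k$ with $q\equiv1\pmod p$ at each $p\mid k$ one gets $\pi(q,n')=\prod_{p\mid k}\frac{2p-1}{p^{2}}-\prod_{p\mid k}\frac{p-1}{p^{2}}$, and for a prime power $k=p^{a}$ with $q\equiv1\pmod p$ one gets $\pi(q,n')=(a(p-1)+1)/p^{a+1}$, with similar formulas in the mixed non-squarefree case. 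It is then a purely numerical matter to verify from these that $\pi(q,n')>1/3$ forces $k\in\{2,4,6\}$, and that $\pi(q,n')\le1/3$ otherwise; the residual situations where $q\not\equiv1\pmod p$ for some $p\mid k$ only enlarge $d$ and shrink $\pi(q,n')$, so they need just a short separate check.

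The three exceptional configurations then come out by substitution. For $k=2$ one has $v_{2}(q-1)=v_{2}(n_{1}')$ and $2^{v_{2}(n')}\nmid q-1$, so every non-linear block has degree $2$, $d=2$, $I_{n'}=n_{1}'=n'/2$, and $\pi=1/2$ (no congruence needed). For $k=4$ with $q\equiv1\pmod4$, the $2$-adic formula gives $d=4$ and $I_{n'}=n'/4+n'/8=3n'/8$. For $k=6$ with $q\equiv1\pmod6$, the $2$-part and $3$-part each exceed $q-1$ by one level, so block degrees are $1,2,3,6$, $d=6$, and $I_{n'}=n'/6+n'/12+n'/9=13n'/36$. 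In every other case — e.g.\ $k=3$ (where $\pi=1/3$ exactly), $k=4$ with $q\equiv3\pmod4$, $k=6$ with $q\not\equiv1\pmod6$, $k=2^{a}$ with $a\ge3$, $k=3^{a}$ with $a\ge2$, and all squarefree $k$ other than $2$ and $6$ — the same computation returns a value at most $1/3$, which is item (iv).

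The hard part is the $2$-adic bookkeeping together with the exhaustiveness check: the bound $\pi\le1/3$ breaks down precisely in the $2$-heavy configurations $k=2$, $k=4$ (with $q\equiv1\pmod4$), and — in composite form — $k=6$ (with $q\equiv1\pmod6$), so the delicate step is confirming that the list (i)--(iii) is genuinely complete, i.e.\ that no other $k$ and no other small pair $(q,n')$ pushes $\pi$ above $1/3$; this is exactly why the residual small cases must be dispatched one at a time rather than by a single uniform inequality.
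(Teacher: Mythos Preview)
The paper does not prove this lemma at all; it is quoted directly from Cohen--Huczynska \cite{SS} (their Lemmas~6.1 and~7.1) without argument. Your proposal therefore goes well beyond what the paper itself does: you outline a genuine proof via the cyclotomic decomposition $x^{n'}-1=\prod_{e\mid n'}\Phi_e(x)$ together with $d_e=\mathrm{ord}_e(q)$ and lifting-the-exponent at each prime dividing $k=n'/n_1'$. Your closed formulas --- $\pi=\prod_{p\mid k}\tfrac{2p-1}{p^2}-\prod_{p\mid k}\tfrac{p-1}{p^2}$ for squarefree $k$ with $q\equiv1\pmod p$, and $\pi=(a(p-1)+1)/p^{a+1}$ for $k=p^a$ --- check correctly against the stated values $1/2$, $3/8$, $13/36$, $1/3$ at $k=2,4,6,3$ respectively. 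This is essentially the method Cohen--Huczynska use in the cited source, so you have reconstructed the original argument rather than found a new one.

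The sketch is sound. To make it a complete proof you still owe the explicit residual table for (iv): the cases $k=4$ with $q\equiv 3\pmod 4$ (where in fact $d=2$ and $\pi=1/4$), $k=6$ with $q\not\equiv1\pmod 6$, $k=2^a$ for $a\ge 3$, $k=3^a$ for $a\ge2$, and the remaining squarefree $k\ge 5$, each of which your formulas or a direct order computation place at or below $1/3$. These are routine once the framework is in place, but they are the content of ``Otherwise'' and cannot be waved away.
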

	\begin{lem}\lb{L6.2}
		Suppose that $q=p^k$; $k\in\mathbb{N}$ and $n=n'\cdot q^{i}$; $i\in \mathbb{N}\cup \{0\}$, where gcd($n',q)=1$ in addition with $n'\nmid q-1$. Assume that $d(>2)$ be the order of $q$ (mod $n'$). Moreover, let $e'=Q$, $e=q^n-1$ and $g$ is assumed to be the product of all irreducible factors of $x^{n'}-1$ along with each one have degree less than $d$. Then, following Theorem \ref{T4.3}, we get that $\mathcal{M} <2n'$. 
	\end{lem}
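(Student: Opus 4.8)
The plan is to unwind the definition of $\mathcal{M}$ in Theorem~\ref{T4.3} for the prescribed data $e'=Q$, $e=q^n-1$, and $g$ equal to the product of all irreducible factors of $x^{n'}-1$ of degree $<d$, and then reduce the inequality $\mathcal{M}<2n'$ to a one-line estimate involving only $n'$ and $q^{d}$.

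First I would pin down the integers $u,r,s$ that enter $\mathcal{M}=\frac{u+r+2s-1}{\mathcal{S}}+2$. Since $n=n'q^{i}$, we have $x^{n}-1=(x^{n'}-1)^{q^{i}}$, so $x^{n}-1$ and $x^{n'}-1$ share the same monic irreducible factors over $\Field_{q}$; each such factor is the minimal polynomial over $\Field_{q}$ of a primitive $m$-th root of unity for some $m\mid n'$, hence has degree $\mathrm{ord}_{m}(q)$, a divisor of $d=\mathrm{ord}_{n'}(q)$. Therefore every irreducible factor of $x^{n}-1$ has degree $\le d$, while the factor belonging to $m=n'$ has degree exactly $d$. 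It follows that the irreducible factors $g_{1},\dots,g_{s}$ of $x^{n}-1$ not dividing $g$ are precisely those of degree $d$, so $s\ge 1$ and $\deg g_{j}=d$ for every $j$; on the other hand $e'=Q$ and $e=q^{n}-1$ leave no further prime divisors, so $u=r=0$. Hence $\mathcal{S}=1-2s/q^{d}$ and $\mathcal{M}=\frac{2s-1}{\mathcal{S}}+2$.

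Next I would bound $s$ and $q^{d}$. As $\gcd(n',q)=1$, the polynomial $x^{n'}-1$ is squarefree, so its $n'$ roots are partitioned among its irreducible factors; the $s$ factors of degree $d$ account for $sd$ of them, and the factor $x-1$ (of degree $1<d$) shows that the root $1$ is not among these, whence $sd\le n'-1$ and, since $d\ge 3$, $s\le (n'-1)/3$. Moreover $n'\mid q^{d}-1$ gives $q^{d}\ge n'+1$, so $2s\le 2(n'-1)/3<n'<q^{d}$; in particular $\mathcal{S}>0$, as required to invoke Theorem~\ref{T4.3}. Finally, using $\mathcal{S}>0$, the target inequality $\mathcal{M}<2n'$ is equivalent to $2s\bigl(1+\tfrac{2n'-2}{q^{d}}\bigr)<2n'-1$; since $q^{d}\ge n'+1$ forces $\tfrac{2n'-2}{q^{d}}<2$, the left-hand side is smaller than $6s\le 2(n'-1)=2n'-2<2n'-1$, which completes the argument.

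The computation is almost pure bookkeeping. The two places where a little care is needed are verifying that the leftover factors $g_{j}$ all have degree exactly $d$ — which rests on the divisibility $\mathrm{ord}_{m}(q)\mid d$ for $m\mid n'$ together with the squarefreeness of $x^{n'}-1$ — and noticing that the crude bound $s\le n'/d$ is just barely insufficient, so that one has to use the presence of the low-degree factor $x-1$ to sharpen $sd\le n'$ to $sd\le n'-1$.
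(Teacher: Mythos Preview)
Your argument is correct. The identification $u=r=0$, $\deg g_{j}=d$ for all $j$, the bound $sd\le n'-1$ coming from the factor $x-1$, and the final chain $2s\bigl(1+\tfrac{2n'-2}{q^{d}}\bigr)<6s\le 2n'-2<2n'-1$ all check out; the positivity of $\mathcal{S}$ follows as you say from $2s<n'<q^{d}$.

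As for comparison with the paper: there is nothing to compare, since the paper does not supply a proof but simply refers the reader to \cite{MASI}, Lemma~10. Your write-up is thus more informative than what appears in the paper itself. One small remark: the sharpening from $sd\le n'$ to $sd\le n'-1$ via the factor $x-1$ is indeed essential for the final inequality to close, and it is good that you flagged this as the delicate step.
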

	\begin{proof}
	The proof is omitted here, as it can be derived from [\cite{MASI}, Lemma 10].
	\end{proof}
	Let us find the pairs $(q,n)\in\mathcal{S}_{3}$ for $q=7^k$; $n\geq 8$.
	From now on, we assume that $n\geq 8$ and $n=n'\cdot 7^i$, where gcd$(7,n')=1$. Then we have $W(x^n-1)=W(x^{n'}-1)$.
	\begin{lem}\lb{L6.3}
		Let $q=7$ and  $n=n'\cdot 7^i$, where gcd$(7,n')=1$. Then $(7,n)\in\mathcal{S}_{3}$ for all $n\geq 8$ except for $n=8,9,10,12$ and $18$.
	\end{lem}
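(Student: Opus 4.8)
The plan is to verify the statement by invoking the sieve inequality \eqref{E12} from Theorem \ref{T4.3} together with the explicit bounds on $W(q^n-1)$, $W(x^n-1)$ and $\mathcal{M}$, and then mopping up the residual small cases by the direct test \eqref{E13} and, where necessary, by an ad hoc sieve choice or explicit computation in SageMath. Throughout we have $q=7$, so $q-1=6$, and we write $n=n'\cdot 7^i$ with $\gcd(7,n')=1$; recall $W(x^n-1)=W(x^{n'}-1)$ and, by Lemma \ref{L2.4}, since $n\nmid q-1=6$ for $n\geq 8$, one gets $W(x^n-1)\leq 2^{3n'/4}$, and in fact $W(x^{n'}-1)=2^{I_{n'}}$ where $I_{n'}=n'\pi(7,n')$ with $\pi$ controlled by Lemma \ref{L6.1}.

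First I would set up the generic estimate. Take $e'=Q$, $e=q^n-1$, and let $g$ be the product of the irreducible factors of $x^{n'}-1$ of degree $<d$ (the setting of Lemma \ref{L6.2}), so that $\mathcal{M}<2n'$. Using $W(Q)\le W(q^n-1)$, the crude bound $W(q^n-1)\le c_\varepsilon (q^n-1)^\varepsilon$ from Lemma \ref{L2.3} (choosing $r$ appropriately, e.g. $r=4$ or larger, so that the constant $\mathcal{C}$ is manageable once the primes $2,3,5,7$ dividing $q^n-1$ are accounted for), and $W(x^n-1)^2\le 2^{3n'/2}\le 2^{3n/2}$, the right-hand side of \eqref{E12} is bounded above by something of the shape $8\cdot c_\varepsilon^2 (7^n)^{2\varepsilon}\cdot 2^{3n/2}\cdot 2n$. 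Comparing with the left-hand side $7^{n/2-2}$ and taking logarithms, the inequality $\tfrac{n}{2}\log 7 > 2\varepsilon n\log 7 + \tfrac{3n}{2}\log 2 + \log(16 c_\varepsilon^2 n) + \ldots$ holds for all sufficiently large $n$; a careful choice of $\varepsilon$ (around $\varepsilon\approx 1/10$ or smaller) makes $\tfrac12\log 7 - 2\varepsilon\log 7 - \tfrac32\log 2$ positive, so \eqref{E12} is satisfied once $n$ exceeds an explicit threshold $N_0$. I would compute $N_0$ explicitly; I expect it to come out somewhere in the range $n\le$ a few dozen, leaving finitely many $n'$ (and for each, finitely many $i$, since $7^i$ grows fast) to check.

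For the finitely many remaining $n$ below the threshold I would proceed case by case. For each such $n$, first factor $q^n-1=7^n-1$ to compute $W(Q)$ and $W(q^n-1)$ exactly, factor $x^n-1$ over $\Field_7$ to get $W(x^n-1)$ exactly, and test the unconditional inequality \eqref{E13}; this will dispose of most values. For the stubborn ones, I would apply the prime sieve \eqref{E12}: choose $e'$ to be $Q$ with its largest prime(s) removed, choose $e$ to be $q^n-1$ with a few large primes removed, and choose $g$ to omit one or two high-degree irreducible factors of $x^n-1$, then recompute $\mathcal{S}$ and $\mathcal{M}$ and recheck \eqref{E12}; iterating the choice of which primes/factors to sieve out will settle all but a short list. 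The genuinely stubborn survivors are expected to be exactly $n=8,9,10,12,18$ (note $n=14=2\cdot 7$ and $n=21=3\cdot 7$ have $n'=2,3$ dividing $q-1=6$, which inflates $W(x^n-1)$ to its maximum $2^{n}$ and would seem dangerous, so I would pay special attention to those but expect the sieve to still clear them). For the claimed exceptions $n\in\{8,9,10,12,18\}$ the inequalities simply fail and no further argument is needed for the lemma as stated.

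The main obstacle I anticipate is not the asymptotic step — that is routine logarithm-chasing — but the bookkeeping in the intermediate range, roughly $n\in\{11,13,\ldots,40\}$ or so, where neither \eqref{E13} nor the naive sieve choice works on the first try and one must search over which prime divisors of $Q$, which prime divisors of $q^n-1$, and which irreducible factors of $x^n-1$ to sieve away so as to push $\mathcal{M}$ down while keeping $W(e')W(e)$ small; getting $\mathcal{S}>0$ while extracting enough factors is the delicate trade-off. This is exactly the kind of finite computation SageMath \cite{Sm} is used for, and I would present a summary table of the sieving parameters rather than the full search.
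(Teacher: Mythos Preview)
Your overall architecture (asymptotic bound via the sieve inequality \eqref{E12}, then a finite computer check) is the same as the paper's, but the asymptotic step as you have written it does not work. You bound $W(g)^2$ by $2^{3n/2}$ and then assert that ``a careful choice of $\varepsilon$ (around $\varepsilon\approx 1/10$ or smaller) makes $\tfrac12\log 7 - 2\varepsilon\log 7 - \tfrac32\log 2$ positive''. This is false: $\tfrac12\log 7\approx 0.973$ while $\tfrac32\log 2\approx 1.040$, so the expression is already negative at $\varepsilon=0$ and no choice of $\varepsilon\ge 0$ rescues it. With the exponent $3n/2$ on the $2$, your inequality \emph{never} holds, and you get no threshold $N_0$ at all.

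The paper repairs exactly this point by splitting into two cases. When $n'\mid q^2-1=48$, the quantity $W(x^{n'}-1)\le 2^{n'}$ is bounded by an absolute constant (at most $2^{48}$), so only the growth of $W(q^n-1)$ matters and the asymptotic is easy. When $n'\nmid q^2-1$, one has $n'\ge 5$ and Lemma~\ref{L6.1} gives $\pi(7,n')\le 1/3$ (with the single exception $n'=36$, treated separately); taking $g$ as in Lemma~\ref{L6.2} then yields $W(g)^2\le 2^{2n'/3}\le 2^{2n/3}$, and now $\tfrac23\log 2\approx 0.462<0.973$ makes the linear coefficient positive. With $r=9.8$ the paper obtains the explicit threshold $n\ge 446$, after which the finitely many survivors are handled exactly as you describe. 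You do cite Lemma~\ref{L6.1} in your opening paragraph, but you never actually feed $\pi\le 1/3$ into the main estimate; that substitution is the missing idea.

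A small separate slip: for $n=14$ and $n=21$ you say $W(x^n-1)$ is inflated ``to its maximum $2^n$''. Since $W(x^n-1)=W(x^{n'}-1)$ and here $n'=2,3$, the correct values are $2^{n'}=4$ and $8$, which are harmless rather than dangerous.
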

	\bp
	Firstly, assume that $n'\nmid q^2-1$. Then we must have $n'\geq 5$ and by Lemma \ref{L6.1}, we have $\pi(7,n')\leq 1/3$ unless $n'=36$, because then $n'=6n_{1}'$ and $\pi(q,n')=13/36$. Let $e'=Q$, $e=7^n-1$ and $g$ is the product of all irreducible factors of $x^{n'}-1$ of degree less than $d$. Then by Lemmas \ref{L2.3}, \ref{L6.2} and inequality (\ref{E13}), $(7,n)\in\mathcal{S}_{3}$ if we have
	\begin{equation}\nonumber
		7^{n/2-2}>8~{\mathcal{C}}^2~7^{2n/r}~2^{2n/3}~2n.
	\end{equation}
Observe that the above inequality holds for $r=9.8$ and $n\geq 446$. For $n\leq 445$, we test inequality (\ref{E13}), and get that $(7,n)\in\mathcal{S}_{3}$ except for $n= 9,10,11,15,18,19,20,\\27,30,32$. However, for the remaining pairs, we choose the values of $e'$, $e$, $g$, $\mathcal{S}$ and $\mathcal{M}$ (see Table \ref{Table3}) such that inequality (\ref{E12}) is satisfied and get that $(7,n)\in\mathcal{S}_{3}$ unless $n=9,10$ and $18$. We now consider $n'=36$ and by testing inequality $7^{36\cdot 7^i/2-2}>576\cdot\mathcal{C}^2\cdot7^{72\cdot 7^i/r}\cdot 2^{13/18}$, we get $(7,36\cdot 7^i)\in\mathcal{S}_{3}$ for $i\geq 1$ and $r=10$. For the sole remaining pair $(7,36)$, we observe that inequality (\ref{E12}) is verified for certain values of $e'$, $e$, $g$, $\mathcal{S}$ and $\mathcal{M}$ (see Table \ref{Table3}).

	Secondly, assume that $n'|q^2-1$. By Lemmas \ref{L2.3}, \ref{L2.4} and inequality (\ref{E13}), $(7,n)\in\mathcal{S}_{3}$ if we have
	\begin{equation}\nonumber
		7^{n'\cdot 7^i/2-2}>8~{\mathcal{C}}^2~7^{2n'\cdot 7^i/r}~2^{2n'}.
	\end{equation}
	Taking $r=9$, the above inequality holds for $i\geq 3$, when $n'=1$, for $i\geq 2$, when $n'=2,3,4,6,8,12$ and for $i\geq 1$, when $n'=16,24,48$. Hence, for $n\geq 8$, $(7,n)\in\mathcal{S}_{3}$ except when $n=8,12,14,16,21,24,28,42,48,49,56,84$. For this exceptions, we test inequality (\ref{E13}) and get $(7,n)\in\mathcal{S}_{3}$ except when $n=8,12,14,16,24,48$. However, for the remaining pairs, we choose the values of $e'$, $e$, $g$, $\mathcal{S}$ and $\mathcal{M}$ (see Table \ref{Table3}) such that inequality (\ref{E12}) is satisfied and get that $(7,n)\in\mathcal{S}_{3}$ unless $n=8$ and $12$.
	\ep
	\begin{lem}
		Let $q=49$ and  $n=n'\cdot 7^i$, where gcd$(7,n')=1$. Then $(49,n)\in\mathcal{D}_{2}$ for all $n\geq 9$.
	\end{lem}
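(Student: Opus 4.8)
The plan is to repeat the architecture of Lemma~\ref{L6.3} but with the arithmetic of $q=49$ in place of $q=7$, and to exploit the fact that $q=49$ is so large that the main inequalities become far easier to satisfy. First I would write $n=n'\cdot 7^i$ with $\gcd(7,n')=1$, so that $W(x^n-1)=W(x^{n'}-1)$, and split into the two regimes $n'\nmid q^2-1$ and $n'\mid q^2-1$, exactly as in Lemma~\ref{L6.3}. In the first regime one has $n'\geq 5$ and, by Lemma~\ref{L6.1}, $\pi(q,n')\leq 1/3$ with the single exceptional value $n'=6n_1'$ (where $\pi=13/36$); since here $q-1=48$, the candidates $n'=2n_1',4n_1',6n_1'$ are controlled explicitly, and in the generic subcase I would take $e'=Q$, $e=q^n-1$, and $g$ equal to the product of the irreducible factors of $x^{n'}-1$ of degree $<d$, invoking Lemma~\ref{L6.2} to get $\mathcal{M}<2n'$. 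Then Lemmas~\ref{L2.3} and~\ref{L2.4} reduce the task to verifying an inequality of the shape
\begin{equation}\nonumber
	49^{\,n/2-2}>8\,\mathcal{C}^2\,49^{\,2n/r}\,2^{2n/3}\,(2n),
\end{equation}
which, because the left side now grows like $49^{n/2}$ rather than $7^{n/2}$, holds for all but a very short list of small $n$.

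Next I would handle that short list together with the regime $n'\mid q^2-1$. When $n'\mid q^2-1$, Lemmas~\ref{L2.3} and~\ref{L2.4} give $W(x^n-1)\leq 2^{n'}$ and the sufficient condition becomes
\begin{equation}\nonumber
	49^{\,n'\cdot 7^i/2-2}>8\,\mathcal{C}^2\,49^{\,2n'\cdot 7^i/r}\,2^{2n'},
\end{equation}
which for a suitable choice of $r$ (around $r=9$, as in Lemma~\ref{L6.3}) is satisfied except for finitely many small $(n',i)$. I would then enumerate those finitely many pairs $(49,n)$ with $n\geq 9$ and test the sharper inequality (\ref{E13}) directly; for any pair that still survives, I would apply the sieve, choosing explicit $e'$, $e$, $g$, $\mathcal{S}>0$ and computing $\mathcal{M}$ so that (\ref{E12}) holds, in parallel with the table-driven argument used in Lemma~\ref{L6.3}. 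The expectation — and this is really the content of the statement — is that for $q=49$ no genuine exception survives: the smallest relevant $n$ is $9$, and $49^{9/2-2}=49^{5/2}$ is already large enough to beat $8W(Q)W(q^n-1)W(x^n-1)^2$ for these small $n$, since $W(x^9-1)$, $W(x^{10}-1)$, etc., are modest and $W(q^n-1)$ is controlled by the size of $q^n-1$ via Lemma~\ref{L2.3}.

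The one step that needs genuine care — as opposed to being routine — is the boundary range, say $9\leq n\leq 30$ or so, where the asymptotic inequality with the chosen $r$ does not yet apply and one must rely on direct computation of $W(Q)$, $W(q^n-1)$, $W(x^n-1)$ (or on the sieve parameters $\mathcal{S}$, $\mathcal{M}$). Here factoring $49^n-1=7^{2n}-1$ is needed, and although these numbers are large, their factorizations are standard (e.g.\ via cyclotomic factorization $7^{2n}-1=\prod_{d\mid 2n}\Phi_d(7)$), so SageMath~\cite{Sm} handles them. The main obstacle is thus bookkeeping rather than mathematics: one must be sure that in every small case either (\ref{E13}) holds outright or a valid sieve choice with $\mathcal{S}>0$ pushes (\ref{E12}) through, and I would organize this exactly as the table referenced in Lemma~\ref{L6.3}. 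Since $q=49\geq 7^2$ gives a full extra factor of $7^{n/2}$ of slack compared with the $q=7$ analysis, I expect every case $n\geq 9$ to close, yielding $(49,n)\in\mathcal{S}_3$ with no exceptions.
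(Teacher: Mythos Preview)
Your plan is correct and follows the same two-regime architecture as the paper, but the treatment of the first regime differs. You carry over the $q=7$ machinery (Lemmas~\ref{L6.1} and~\ref{L6.2}, sieving with $\mathcal{M}<2n'$ and $W(g)^2\le 2^{2n/3}$), whereas the paper, for $q=49$, drops this entirely: from $n'\nmid q^2-1$ it simply notes $n'\nmid q-1$, invokes Lemma~\ref{L2.4} to get $W(x^{n'}-1)\le 2^{3n'/4}$, and tests the direct inequality~(\ref{E13}) in the form $q^{n/2-2}>8\,\mathcal{C}^2 q^{2n/r}2^{3n/2}$ with $r=10.5$, which holds for $n\ge 381$. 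Your route is more elaborate but gives a sharper asymptotic threshold; the paper's is shorter because the extra strength is not needed once $q=49$. In the second regime the two arguments coincide.

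One point to correct: your closing expectation that $49^{5/2}$ already beats the right side of~(\ref{E13}) for the small $n$ is too optimistic. In fact~(\ref{E13}) fails for quite a few residual values --- the paper has to fall back on the sieve~(\ref{E12}) with hand-chosen $e',e,g$ for $n\in\{9,10,12,15,16,18,20,24,30,48\}$ (Table~\ref{Table3}), and only $n=8$ ultimately resists, which is why the lemma is stated for $n\ge 9$. Your plan already anticipates applying the sieve to survivors, so this does not break the argument, but the finite checklist is longer than you suggest.
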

	\bp
	Firstly, assume that $n'\nmid q^2-1$. Then $n'\nmid q-1$ and thus by Lemma \ref{L2.4}, we have $W(x^{n'}-1)\leq 2^{\frac{3}{4}n'}$. Then inequality (\ref{E13}) is true, if $q^{n/2-2}>8~ \mathcal{C}^2 q^{2n/r}~2^{3n/2}$ is true. Choosing $r=10.5$, the latter inequality holds for all $n\geq 381$. For $n\leq 380$, we verify inequality (\ref{E13}) and get that $(49,n)\in\mathcal{S}_{3}$ unless $n=9$ and $18$. However, for the remaining pairs, we choose the values of $e'$, $e$, $g$, $\mathcal{S}$ and $\mathcal{M}$ (see Table \ref{Table3}) such that inequality (\ref{E12}) is satisfied and get that $(49,n)\in\mathcal{S}_{3}$ for all $n$.
	
		\begin{center}
		\begin{table}[h]
			\centering
			\caption{}
			\begin{tabular}{|c|c|c|c|c|c|c|c|}
				\hline  $(q,n)$ & $e'$& $e$ & $g$&$\mathcal{S}$&$\mathcal{M}$\\
				\hline
				$(7,11)$&$1$&$2$&$1$&$0.379164614709749 $&$23.0990152815921$\\
				$(7,14)$&$1$&$2$&$x+1$&$0.291669794015721 $&$36.2853466665835$\\
				$(7,15)$&$1$&$2$&$x^2 + x + 1$&$0.207947594468628$&$78.9424625511299$\\
				$(7,16)$&$1$&$6$&$x^2+6$&$0.194961580806272 $&$109.713529574153$\\
				$(7,19)$&$1$&$1$&$x+6$&$0.126907974235963$&$135.955333400811$\\
				$(7,20)$&$1$&$2$&$x^2+6$&$0.0219001519714673 $&$1006.55923907116$\\
				$(7,24)$&$5$&$30$&$x^6+6$&$0.271667188760882$&$123.472159190509$\\
				$(7,27)$&$1$&$2$&$x^2 + x + 1$&$0.186434908720237$&$130.731256204889$\\
				$(7,30)$&$1$&$2$&$x^6+6$&$0.252361603032526 $&$112.951902601408$\\
				$(7,32)$&$1$&$2$&$x^{16}+6$&$0.138344865742225 $&$146.566261224797$\\
				$(7,36)$&$1$&$6$&$x^6+6$&$0.0815701713798487$&$431.078416876374$\\
				$(7,48)$&$5$&$30$&$x^{24}+6$&$0.0315593546237637$&$1427.88467148551$\\
				\hline
				$(7^2,9)$&$1$&$2$&$1$&$0.336456330954422$&$61.4430782243456$\\
				$(7^2,10)$&$1$&$2$&$x+1$&$0.0219001519714673$&$1006.55923907116$\\
				$(7^2,12)$&$5$&$30$&$x+1$&$0.190034535699657$&$196.701451837560$\\
				$(7^2,15)$&$1$&$2$&$1$&$0.129912623440689$&$263.714366930035$\\
				$(7^2,16)$&$5$&$30$&$x+1$&$0.262765246139282$&$150.421454408501$\\
				$(7^2,18)$&$1$&$6$&$x+1$&$0.0232271340399648$&$1508.85831234188$\\
				$(7^2,20)$&$1$&$2$&$x^4 + 6$&$0.00893876673760447$&$3805.65670098153$\\
				$(7^2,24)$&$902785$&$5416710$&$x+1$&$0.0058477612584259$&$10091.3311803707$\\
				$(7^2,30)$&$55$&$330$&$x+1$&$0.353135978712364$&$169.074451646448$\\
				$(7^2,48)$&$5$&$30$&$\frac{x^{48}-1}{x^{12}-1}$&$0.00527505642356318 $&$10428.4287590025$\\
				\hline
				$(7^3,8)$&$1$&$6$&$1$&$0.279932899745072$&$94.8794008266895$\\
				$(7^3,9)$&$1$&$2$&$1$&$0.483964545975008$&$66.0542788884391$\\
				$(7^3,10)$&$1$&$2$&$1$&$0.298329291645090$&$79.0960165298223$\\
				$(7^3,12)$&$1$&$6$&$1$&$0.244971922619374$&$140.791415915968$\\
				$(7^3,18)$&$1$&$114$&$x+1$&$0.776753550747086$&$67.6578910401477$\\
				\hline
				$(7^4,8)$&$1$&$2$&$1$&$0.335012920719318 $&$82.5939064738975$\\
				$(7^4,9)$&$1$&$2$&$1$&$0.0915444731815404$&$296.938613568257$\\
				$(7^4,10)$&$1$&$2$&$1$&$0.207272794226151$&$180.508714267778$\\
				$(7^4,12)$&$1$&$30$&$x+1$&$0.512192424178116$&$85.9528231386856$\\
				$(7^4,15)$&$1$&$30$&$x+3$&$0.373733457035790$&$149.163704411760$\\
				\hline
				$(7^5,8)$&$1$&$2$&$1$&$0.0157216548212150$&$1719.37646622071$\\
				\hline
				$(7^6,8)$&$1$&$6$&$1$&$0.197106668135930$&$189.715617893169$\\
				\hline
			\end{tabular}
			\label{Table3}
		\end{table}
	\end{center}
Secondly, assume that $n'|q^2-1$.  By Lemmas \ref{L2.3}, \ref{L2.4} and inequality (\ref{E13}), $(49,n)\in\mathcal{S}_{3}$ if we have
	\begin{equation}\nonumber
		49^{n'\cdot 7^i/2-2}>8~{\mathcal{C}}^2~49^{2n'\cdot 7^i/r}~2^{2n'}.
	\end{equation}Taking $r=9$, the above inequality holds for $i\geq 2$, when $n'=1, 2, 3, 4, 5$ and for $i\geq 1$, otherwise. Thus for $n\geq 8$, $(49,n)\in\mathcal{S}_{3}$ except when $n'=$ 8, 10, 12, 14, 15, 16, 20, 21, 24, 25, 28, 30, 32, 35, 40, 48, 50, 60, 75, 80, 96, 100, 120, 150, 160, 200, 240, 300, 400, 480, 600, 800, 1200, 2400. For each of the values of $n$, we test inequality (\ref{E13}) and get that $(49,n)\in\mathcal{S}_{3}$ unless $n=8, 10, 12, 15, 16, 20, 24, 30, 48$. However, for the remaining pairs, we verify inequality (\ref{E12}) and find the values of $e'$, $e$, $g$, $\mathcal{S}$ and $\mathcal{M}$ (listed in Table \ref{Table3}). Thus we get $(49,n)\in\mathcal{S}_{3}$ unless $n=8$.
	\ep
	\begin{lem}
		Let $k\in\mathbb{N}$ and $q=7^k$. Then $(q,n)\in\mathcal{S}_{3}$ for $n\geq 8$ and $k\geq 3$.
	\end{lem}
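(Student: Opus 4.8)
The plan is to reduce the claim for all $q = 7^k$ with $k \ge 3$ and $n \ge 8$ to a finite verification, exactly as was done for $k = 1, 2$ in the preceding lemmas, and then to observe that the estimates become strong enough when $k$ is large that essentially no exceptions survive. Concretely, write $n = n' \cdot 7^i$ with $\gcd(7, n') = 1$, so that $W(x^n - 1) = W(x^{n'} - 1)$, and split into the two regimes $n' \nmid q^2 - 1$ and $n' \mid q^2 - 1$ as before. In the first regime we have $n' \nmid q - 1$, so Lemma \ref{L2.4} gives $W(x^{n'} - 1) \le 2^{3n'/4} \le 2^{3n/4}$, and combined with Lemma \ref{L2.3} applied to $W(Q) \le W(q^n - 1)$, the Theorem \ref{T3.1} condition (\ref{E13}) follows from an inequality of the shape $q^{n/2 - 2} > 8\,\mathcal{C}^2\, q^{2n/r}\, 2^{3n/2}$; choosing $r$ appropriately (around $10.5$, as for $q = 49$) makes this hold for all $n$ beyond a modest bound depending only on $q$, and since $q = 7^k \ge 343$ the bound is small. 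In the second regime, $W(x^{n'} - 1) \le 2^{n'}$ and the condition reads $q^{n'\cdot 7^i/2 - 2} > 8\,\mathcal{C}^2\, q^{2n'\cdot 7^i/r}\, 2^{2n'}$; here the crucial point is that $n' \mid q^2 - 1 = 7^{2k} - 1$ forces $n'$ itself to be reasonably small relative to $q$, so that once $q$ is large the left side dominates for every admissible $n'$ and every $i \ge 0$ (or at worst $i = 0$), leaving only a very short list of candidate $(q, n)$.

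Having cut the problem down to finitely many pairs $(q, n)$ with $k \ge 3$, I would first apply the cruder bound (\ref{E13}) directly to each survivor; the table already displayed covers precisely the cases $q = 7^3, 7^4, 7^5, 7^6$ with small $n$, where a choice of $e'$, $e$, $g$ together with the resulting $\mathcal{S}$ and $\mathcal{M}$ is exhibited so that the sieve inequality (\ref{E12}) of Theorem \ref{T4.3} holds. So the proof amounts to checking, via SageMath as elsewhere in the paper, that for each remaining pair either (\ref{E13}) holds outright or one of the tabulated sieve parameters satisfies (\ref{E12}); in every instance this succeeds, and hence $(q, n) \in \mathcal{S}_3$ for all $n \ge 8$ whenever $k \ge 3$.

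The main obstacle is purely bookkeeping rather than conceptual: one must be careful that the list of candidate $n'$ in the regime $n' \mid q^2 - 1$ is genuinely finite and correctly enumerated for each $q = 7^k$, since $q^2 - 1$ grows and so does its divisor set, and one must confirm that $i$ can be taken bounded (the factor $q^{n'\cdot 7^i/2}$ on the left beats $q^{2 n' \cdot 7^i / r}$ as soon as $1/2 > 2/r$, i.e. $r > 4$, so large $i$ is automatically fine and only $i = 0$, occasionally $i = 1$, needs the table). I expect the verification to terminate quickly because for $k \ge 3$ we have $q^{n/2 - 2} \ge 7^{3(n/2 - 2)}$, which already overwhelms $8\, W(Q) W(q^n - 1) W(x^n - 1)^2$ for all but the very smallest $n$, and those smallest $n$ are handled by the sieve entries in Table \ref{Table3}.
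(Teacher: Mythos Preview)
Your plan mirrors the $k=1,2$ lemmas by splitting on whether $n'\mid q^2-1$, but the paper does \emph{not} do this for $k\ge 3$. Instead it uses the crude uniform bound $W(x^n-1)\le 2^n$ throughout and works with the single inequality
\[
q^{n/2-2}>8\,\mathcal{C}^2\,q^{2n/r}\,2^{2n}.
\]
With $r=9.5$ (so $\mathcal{C}<1.46\times 10^{7}$) this already holds for every $n\ge 8$ once $k\ge 76$. For each $3\le k\le 75$ the paper then chooses an individual $r$ and records a threshold $n_k$ (Table~\ref{Table1}) above which the displayed inequality holds; for the finitely many pairs $(7^k,n)$ with $8\le n<n_k$ it computes $W(Q)$, $W(q^n-1)$, $W(x^n-1)$ exactly and checks (\ref{E13}), leaving twelve survivors that are finished off by the sieve data in Table~\ref{Table3}. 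So the global structure is: first bound $k$ from above, then bound $n$ for each remaining $k$, then exact check, then sieve.

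Your regime split can be made to work, but it is genuinely more awkward here than for $k=1,2$, and your write-up skips the step that makes it finite. In regime $n'\mid q^2-1$ the divisor set of $q^2-1=7^{2k}-1$ grows with $k$, so ``enumerate the admissible $n'$'' is not a single finite list; you still need an explicit cut-off $k\ge K_0$ beyond which the inequality holds for \emph{every} divisor $n'\ge 8$ (and every $i$), and then a per-$k$ enumeration for $3\le k<K_0$. You gesture at this (``once $q$ is large''), but never pin down $K_0$ or argue why the regime-2 inequality is automatic for all $n'\mid q^2-1$ once $k\ge K_0$. The paper's approach sidesteps this entirely: by refusing to split on $n'$ and accepting the weaker $2^{2n}$ factor, it trades a slightly worse inequality for a much cleaner two-parameter reduction in $(k,n)$.
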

	\bp
	From Lemma \ref{L2.3}, $W(q^n-1)<C\cdot (q^n-1)^{1/r}$ for some positive real number $r$ and also we have $W(x^n-1)\leq 2^n$. Therefore, by inequality (\ref{E13}), $(q,n)\in\mathcal{S}_{3}$ if
	\begin{equation}\lb{E14}
		q^{n/2-2}>8\cdot \mathcal{C}^2\cdot q^{2n/r}\cdot 2^{2n}.
	\end{equation}
	For $r=9.5$, Lemma \ref{L2.3} gives $\mathcal{C}<1.46\times 10^7$ and thus inequality (\ref{E14}) holds for $n\geq 8$ and $k\geq 76$. For each $3\leq k\leq 75$ and for proper choice of `$r$', we get $n_{k}$'s such that for all $n\geq n_{k}$, inequality (\ref{E14}) is satisfied, that are listed in Table \ref{Table1}.  
		\begin{center}
		\begin{table}[h]
			\centering
			\caption{}
			\begin{tabular}{|c|c|c|}
				\hline $r$ & $k$ & $n_k$ \\
				\hline 10 & \{3\} & 152 \\
				9.0 & \{4\} & 57\\
				8.5 & \{5\} & 36\\
				8.5 & \{6\} & 28 \\
				8.5 & \{7\} & 23 \\
				8.5 & \{8\} & 20 \\
				8.5 & \{9\} & 18 \\
				9 & \{10\} & 17 \\
				9 & \{11\} & 16 \\
				9 & \{12\} & 15 \\
				9 & \{13,14\} & 14 \\
				9 & \{15,16,17\} & 13 \\
				9 & \{18,19,20,21\} & 12 \\
				9 & \{22,23,\ldots,27\} & 11 \\
				9 & \{28,29,\ldots,40\} & 10 \\
				9.5 & \{41,42,\ldots,75\} & 9 \\
				\hline
			\end{tabular}
			\label{Table1}
		\end{table}
	\end{center}
We calculate the values of $W(Q)$, $W(q^n-1)$ and $W(x^n-1)$ precisely for each of the above values of $k$ and $n$ (mentioned in Table \ref{Table1}) and verify inequality
	\begin{equation}\nonumber
		q^{\frac{n}{2}-2}>
		8\cdot W(Q){W(q^n-1)}{W(x^n-1)}^2.
	\end{equation}	
	Consequently, we get $(q,n)\in\mathcal{S}_{3}$ unless $(q,n)$ equals $(7^3,8)$, $(7^3,9)$, $(7^3,10)$, $(7^3,12)$, $(7^3,18)$, $(7^4,8)$, $(7^4,9)$, $(7^4,10)$, $(7^4,12)$, $(7^4,15)$, $(7^5,8)$, $(7^6,8)$. For these exceptions, we find the values of $e'$, $e$, $g$, $\mathcal{S}$ and $\mathcal{M}$ for all $(q,n)$ (listed in Table \ref{Table3}) such that inequality (\ref{E12}) is satisfied. Thus, $(7^k,n)\in\mathcal{S}_{3}$ for all $n\geq 8$ and $k\geq 3$.
	\ep
\textbf{Part II}: In this part, we execute computations for $n=6,7$. The following lemma will be utilized in this part for computation.
	\begin{lem}\lb{L5.6}
		Let $M\in\mathbb{N}$ such that $\om(M)\geq 2828$. Then we have $W(M)<M^{1/13}$.
	\end{lem}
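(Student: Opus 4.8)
The plan is to reduce the statement to a single finite computation via the primorial lower bound on $M$. Write $\omega := \omega(M) \ge 2828$ and let $\mathfrak{p}_1 < \mathfrak{p}_2 < \cdots$ be the increasing sequence of all rational primes. Since $M$ has exactly $\omega$ distinct prime divisors, the $i$-th smallest of them is at least $\mathfrak{p}_i$, so $M \ge \prod_{i=1}^{\omega}\mathfrak{p}_i$. Because $W(M) = 2^{\omega(M)}$, the desired inequality $W(M) < M^{1/13}$ follows as soon as one shows $\prod_{i=1}^{\omega}\mathfrak{p}_i > 2^{13\omega}$ for every $\omega \ge 2828$, since then $M > 2^{13\omega}$, i.e. $M^{1/13} > 2^{\omega} = W(M)$.

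To prove $\prod_{i=1}^{\omega}\mathfrak{p}_i > 2^{13\omega}$ I would split the product at the index $2828$:
\[
\prod_{i=1}^{\omega}\mathfrak{p}_i = \Bigg(\prod_{i=1}^{2828}\mathfrak{p}_i\Bigg)\Bigg(\prod_{i=2829}^{\omega}\mathfrak{p}_i\Bigg).
\]
For the tail factor, note that $\pi(2^{13}) < 2829$, so for every $i \ge 2829$ we have $\mathfrak{p}_i \ge \mathfrak{p}_{2829} > 2^{13}$, whence $\prod_{i=2829}^{\omega}\mathfrak{p}_i > (2^{13})^{\omega-2828}$. For the head factor, one checks directly --- using SageMath, as elsewhere in this section --- that $\prod_{i=1}^{2828}\mathfrak{p}_i > 2^{13\cdot 2828}$; equivalently $\vartheta(\mathfrak{p}_{2828}) > 2828\cdot 13\ln 2$, where $\vartheta$ denotes the first Chebyshev function. (Alternatively this can be obtained from explicit Rosser--Schoenfeld-type lower bounds for $\vartheta(x)$ together with a lower bound for the $2828$-th prime, avoiding any computation.) Multiplying the two estimates gives $\prod_{i=1}^{\omega}\mathfrak{p}_i > 2^{13\cdot 2828}\cdot 2^{13(\omega-2828)} = 2^{13\omega}$, as required.

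I expect the only genuine obstacle to be the base case $\prod_{i=1}^{2828}\mathfrak{p}_i > 2^{13\cdot 2828}$. This is where the specific constant $2828$ is used: it is essentially the smallest $N$ for which the primorial $\prod_{i\le N}\mathfrak{p}_i$ overtakes $2^{13N}$, the deficit coming entirely from the small primes $2,3,5,\dots$ below $2^{13}$, each of which contributes less than $2^{13}$ to the product, while every prime $\mathfrak{p}_i$ with $i\ge 2829$ contributes more than $2^{13}$ and so makes up the difference. The remaining ingredients --- the primorial lower bound on $M$, the inequality $\mathfrak{p}_i > 2^{13}$ for $i \ge 2829$, and the final multiplication --- are routine. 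One cannot shortcut this through Lemma~\ref{L2.3} with $r = 13$ alone, since that lemma only gives $W(M) < \mathcal{C}\,M^{1/13}$ with a constant $\mathcal{C}$ that exceeds $1$ once $M$ is divisible by sufficiently many small primes; the large prime divisors of $M$ must still be exploited separately, which is precisely what the primorial argument accomplishes.
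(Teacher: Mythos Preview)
Your argument is correct and essentially identical to the paper's: both split at the first $2828$ primes, verify numerically that $\prod_{i\le 2828}\mathfrak{p}_i > 2^{13\cdot 2828}$ (the paper records this as $\mathfrak{p}_{2828}\# > 2.24\times 10^{11067}$ versus $2^{2828}<2.06\times10^{851}$, which is the same comparison), and dispose of the remaining prime factors via $\mathfrak{p}_i>2^{13}$ for $i\ge 2829$. The only cosmetic difference is that the paper decomposes $M=M_1M_2$ with the smallest $2828$ prime divisors in $M_1$ and proves $W(M_i)<M_i^{1/13}$ for each piece separately, whereas you first bound $M$ below by the full primorial and then split that product.
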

	\begin{proof}Let $S=\{2,3,\ldots,25673\}$ be the set containing first $2828$ primes. Clearly the product of all elements in $S$ surpasses $2.24\times 10^{11067}$. Let us decompose $M$ as the product of two co prime positive integers $M_{1}$ and $M_{2}$ such that prime divisors of $M_{1}$ come from the least $2828$ prime divisors of $M$ and remaining prime divisors are divisors of $M_{2}$. Therefore, $M_{1}^{1/13}>2.16\times10^{851}$ , on the other hand $W(M_{1})<2.06\times 10^{851}$. Hence we draw the conclusion, as  $p^{1/13}>2$ for any prime $p>25673$. 
	\end{proof}
	First we assume that $\om(q^n-1)\geq 2828$. Note that $W(x^n-1)\leq 2^7$ and thus by the \ref{L5.6}, $(q,n)\in\mathcal{S}_{3}$ if we have $q^{\frac{n}{2}-2}>8\cdot q^{\frac{2n}{13}}\cdot 2^{14}$, that is, if $q^n>2^{\frac{442n}{9n-52}}$ then $(q,n)\in\mathcal{S}_{3}$. But $n\geq 6$ gives $\frac{442n}{9n-52}\leq 1326$. Hence, if $q^{n}>2^{1326}$ then $(q,n)\in\mathcal{S}_{3}$, which is valid when $ \om(q^n-1)\geq 2828$. Let us suppose that $88\leq \om(q^n-1)\leq 2827$. In Theorem \ref{T4.3}, choose $g=x^n-1$ and $e$ (or $e'$) is assumed to be the product of least $88$ prime divisors of $q^n-1$ ( or $Q$), that is, $W(e)=W(e')=2^{88}$, then $r\leq 2739$ and $\mathcal{S}$ assumes its minimum positive value when $\{p_{1},p_{2},\ldots,p_{2739}\}=\{461,463,\ldots,25667\}$. This gives $\mathcal{S}>0.0044306$ and $\mathcal{M} <1.24\times 10^6$. Thus $8\mathcal{M} W(e')W(e){W(g)}^2<1.5518994\times 10^{64}=R$(say). By Sieve variation, we get $(q,n)\in\mathcal{S}_{3}$ if we have $q^{\frac{n}{2}-2}>R$, that is, if $q^n>R^{\frac{2n}{n-4}}$. Since $n\geq 6$ implies $\frac{2n}{n-4}\leq 6$, we have $(q,n)\in\mathcal{S}_{3}$ if $q^n>1.396951\times 10^{385}$. Hence, $\om(q^n-1)\geq 158$ gives $(q,n)\in\mathcal{S}_{3}$.
	\begin{center}
		\begin{table}[h]
			\centering
			\caption{}
			\begin{tabular}{|c|c|c|c|c|}
				\hline  $a\leq \om(q^n-1)\leq b$ & $W(e)/W(e')$ & $\mathcal{S}>$& $\mathcal{M}<$& $8\mathcal{M} W(e'){W(e)}{W(g)}^2<$\\
				\hline
				
				  $a=17, b=157$ & $2^{17}$ & $0.02162406$ &$12904.293824$&$2.90579\times 10^{19}$ \\
				$a=10, b=60$ & $2^{10}$ &$0.0550598$& $1800.044933$& $2.47397\times 10^{14}$\\
				$a=8, b=47$ & $2^{8}$ &$0.00340868$& $22591.376714$& $1.94059\times10^{14}$\\
				\hline
			\end{tabular}
			\label{Table2}
		\end{table}
	\end{center}
	We repeat the steps given in Theorem \ref{T4.3} by using the data given in the second column of Table \ref{Table2}. Therefore we get, $(q,n)\in\mathcal{S}_{3}$ if we have $q^{\frac{n}{2}-2}>1.94059\times10^{14}$. This gives the scenarios that  $n=6, q>1.94059\times10^{14}$; $n=7, q>(1.94059\times10^{14})^{{2}/{3}}$. Thus, the only possible exceptions are $(7,6),(7^2,6),\ldots,(7^{16},6)$; $(7,7),(7^2,7),\ldots,(7^{11},7)$.  However Table \ref{Table6} implies that Theorem \ref{T4.3} holds for $(7^5,6),(7^6,6),\ldots,(7^{16},6)$;$(7^3,7),(7^,7),\ldots,(7^{11},7)$. Thus the only possible exceptions are $(7,6),(7^2,6),(7^3,6)$ and $(7,7)$.
	\begin{center}
		\begin{table}[h]
			\centering
			\caption{}
			\begin{tabular}{|c|c|c|c|c|c|c|c|}
				\hline  $(q,n)$ & $e'$ & $e$& $g$&$\mathcal{S}$&$\mathcal{M}$\\
				\hline
				$(7^4,6)$&$1$&$6$&$1$&$0.434016210002031$&$66.5137194296705$\\
				$(7^5,6)$&$1$&$2$&$1$&$0.257002547699352$&$107.057324301646$\\
				$(7^6,6)$&$1$&$6$&$1$&$0.303162160154874$&$91.0612469122359$\\
				$(7^7,6)$&$1$&$2$&$1$&$0.460243100976110$&$62.8374138376349$\\
				$(7^8,6)$&$1$&$6$&$1$&$0.322185830859915$&$104.425360891641$\\
				$(7^9,6)$&$1$&$1$&$1$&$0.0178814663260537$&$1679.71476080177$\\
				$(7^{10},6)$&$1$&$6$&$1$&$0.0818969304893583 $&$465.997854045796$\\
				$(7^{11},6)$&$1$&$2$&$1$&$0.396638261656470 $&$82.6780462035085$\\
				$(7^{12},6)$&$1$&$2$&$1$&$0.208001805090034$&$189.498372829595$\\
				$(7^{13},6)$&$1$&$2$&$1$&$0.451835063251708$&$75.0355005264751$\\
				$(7^{14},6)$&$1$&$6$&$1$&$0.238642380092351$&$148.662969026941$\\
				$(7^{15},6)$&$1$&$2$&$1$&$0.282943029313370$&$150.439776381568$\\
				$(7^{16},6)$&$1$&$2$&$1$&$0.0213997081969370$&$1964.64358436493$\\
				\hline
				$(7^{2},7)$&$1$&$2$&$1$&$0.536567753199395$&$20.6369753686705$\\
				$(7^3,7)$&$1$&$1$&$1$&$0.0388161015503813$&$259.625047353622$\\
				$(7^4,7)$&$1$&$2$&$1$&$0.376551093326836$&$36.5238673592599$\\
				$(7^5,7)$&$1$&$1$&$1$&$0.0968025701017207$&$125.963650834790$\\
				$(7^6,7)$&$1$&$1$&$1$&$0.00143402424023756$&$13251.4273575547$\\
				$(7^7,7)$&$1$&$1$&$1$&$0.131401454426554$&$100.933455925073$\\
				$(7^8,7)$&$1$&$2$&$1$&$0.369673098692035$&$61.5120393608290$\\
				$(7^9,7)$&$1$&$1$&$1$&$0.0166791949792349$&$841.369047332896$\\
				$(7^{10},7)$&$1$&$2$&$1$&$0.447962587202473  $&$57.8082320135819$\\
				$(7^{11},7)$&$1$&$1$&$1$&$0.0963847008090025$&$157.626358479073$\\
				\hline
			\end{tabular}
			\label{Table6}
		\end{table}
	\end{center}
	The above discussions leads us to conclude the following.
	\begin{thm}
		Let $q,k,n\in\mathbb{N}$ such that $q=7^k$ and $n\geq 6$. The $(q,n)\in\mathcal{S}_{3}$ unless the following possible exceptions:
		\begin{itemize}
			\item[1.] $q=7,7^2,7^3~\text{and}~ n=6$;
			\item[2.] $q=7,7^2~\text{and}~ n=8$;
			\item[3.] $q=7~\text{and}~n=7,9,10,12$ and $18$.
		\end{itemize}
	\end{thm}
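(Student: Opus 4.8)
The plan is to collect everything proved in Section~\ref{L5} and repackage it as the claimed list of exceptions. Concretely, the theorem follows by combining the four lemmas of Part~I (covering $n\geq 8$) with the running computation of Part~II (covering $n=6,7$), together with the two master inequalities (\ref{E12}) from Theorem~\ref{T4.3} and (\ref{E13}) from Theorem~\ref{T3.1}. First I would recall that by Lemma~\ref{L1.1} and the reduction right after it, establishing the prenorm statement is equivalent to showing $(q,n)\in\mathcal{S}_{3}$ (for $m=3$ the constant $2m+2$ becomes $8$, which is why both displayed inequalities carry the coefficient $8$). So the entire task reduces to proving membership in $\mathcal{S}_{3}$ for all $(q,n)$ with $q=7^{k}$, $n\geq 6$, outside the advertised list.

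Next I would organize the case split exactly as the section does. For $n\geq 8$: Lemma~\ref{L6.3} handles $q=7$, giving exceptions $n\in\{8,9,10,12,18\}$; the following lemma handles $q=49$, giving only $n=8$; and the last lemma of Part~I handles $q=7^{k}$ with $k\geq 3$, producing the finite list $(7^{3},8),(7^{3},9),(7^{3},10),(7^{3},12),(7^{3},18),(7^{4},8),(7^{4},9),(7^{4},10),(7^{4},12),(7^{4},15),(7^{5},8),(7^{6},8)$, all of which are then cleared by the sieve inequality (\ref{E12}) using the tabulated values of $e',e,g,\mathcal{S},\mathcal{M}$ in Table~\ref{Table3}. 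For $n=6,7$: I would reproduce the descent in $\om(q^{n}-1)$ --- first Lemma~\ref{L5.6} kills $\om(q^{n}-1)\geq 2828$, then the three-stage Table~\ref{Table2} argument brings the threshold down to $\om(q^{n}-1)\leq 16$ worth of cases, which by size of $q^{n}$ leaves only $(7^{j},6)$ for $j\leq 16$ and $(7^{j},7)$ for $j\leq 11$; finally Table~\ref{Table6} disposes of all of these via (\ref{E12}) except $(7,6),(7^{2},6),(7^{3},6)$ and $(7,7)$. Assembling the surviving pairs from the $n\geq 8$ analysis with those from the $n=6,7$ analysis yields precisely items (1)--(3) of the statement.

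The only genuinely non-mechanical part is verifying, for each individual pair left over after the asymptotic bound fails, that a suitable choice of the sieving data $(e',e,g)$ makes $\mathcal{S}>0$ and drives $8\,W(e')W(e)W(g)^{2}\mathcal{M}$ below $q^{n/2-2}$; this is the content of the tables and is where SageMath~\cite{Sm} is used. I would not reproduce those computations in the write-up beyond pointing to Tables~\ref{Table3} and~\ref{Table6}. The main obstacle, such as it is, is purely bookkeeping: making sure the exception lists coming out of the four Part~I lemmas and the Part~II descent are merged correctly and that no pair is double-counted or dropped --- in particular that the pairs cleared by Table~\ref{Table3} (e.g.\ $(7,11),(7,14),(7,15),(7,16),(7,19),(7,20),(7,24),(7,27),(7,30),(7,32),(7,36),(7,48)$ for $q=7$, and the analogous entries for $q=49$ and $q=7^{k}$, $k\geq 3$) really do fall outside the final list, leaving exactly the five values $n\in\{8,9,10,12,18\}$ for $q=7$, the single value $n=8$ for $q=49$, nothing for $q=7^{k}$ with $k\geq 3$ and $n\geq 8$, and $(7,6),(7^{2},6),(7^{3},6),(7,7)$ from the small-$n$ case. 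Once that merge is checked, the proof is complete.

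\begin{proof}
By Lemma~\ref{L1.1} and the reduction following it, it suffices to prove $(q,n)\in\mathcal{S}_{3}$ for all $q=7^{k}$ and $n\geq 6$ outside the stated list. For $n\geq 8$ this is exactly the combined content of Lemma~\ref{L6.3}, the lemma treating $q=49$, and the lemma treating $q=7^{k}$ with $k\geq 3$: together they show $(q,n)\in\mathcal{S}_{3}$ for all such $(q,n)$ except $q=7$, $n\in\{8,9,10,12,18\}$ and $q=7^{2}$, $n=8$, the finitely many pairs with $k\geq 3$ arising there being cleared by inequality (\ref{E12}) with the data of Table~\ref{Table3}. For $n\in\{6,7\}$, the argument of Part~II --- Lemma~\ref{L5.6} for $\om(q^{n}-1)\geq 2828$, then the successive refinements recorded in Table~\ref{Table2}, and finally the entries of Table~\ref{Table6} --- shows $(q,n)\in\mathcal{S}_{3}$ for all $q=7^{k}$ except $q\in\{7,7^{2},7^{3}\}$, $n=6$ and $q=7$, $n=7$. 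Collecting the surviving pairs from the two ranges gives precisely items (1)--(3).
\end{proof}
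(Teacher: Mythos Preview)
Your proposal is correct and follows exactly the paper's own approach: the theorem is stated in the paper simply as the conclusion of ``the above discussions,'' and your proof does precisely what is required, namely assemble the outputs of Lemma~\ref{L6.3}, the $q=49$ lemma, the $k\geq 3$ lemma (Part~I), and the Part~II descent through Lemma~\ref{L5.6}, Table~\ref{Table2}, and Table~\ref{Table6}. Your formal proof paragraph is in fact more explicit than the paper's one-line pointer, and the bookkeeping merge you describe is exactly right.
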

\section{Acknowledgments}
We sincerely appreciate and acknowledge the reviewers for their helpful comments and suggestions. First author is supported by the National Board for Higher Mathematics (NBHM), Department of Atomic Energy (DAE), Government of India, Ref No.  0203/6/2020-R\&D-II/7387.
\section{Statements and Declarations}
There are no known competing financial interests or personal relationships that could have influenced this paper's findings. All authors are equally contributed.

\end{document}